\documentclass[11pt,reqno]{amsart}
\usepackage[a4paper,margin=30mm]{geometry}
\usepackage{amsmath,amssymb,mathtools}
\usepackage{microtype}
\usepackage[colorlinks=true,linkcolor=blue,citecolor=blue,urlcolor=blue]{hyperref}

\newtheorem{theorem}{Theorem}[section]
\newtheorem{proposition}[theorem]{Proposition}
\newtheorem{lemma}[theorem]{Lemma}
\newtheorem{corollary}[theorem]{Corollary}

\theoremstyle{definition}
\newtheorem{definition}[theorem]{Definition}

\theoremstyle{remark}
\newtheorem{remark}[theorem]{Remark}

\numberwithin{equation}{section}

\newcommand{\R}{\mathbb{R}}
\newcommand{\C}{\mathbb{C}}
\newcommand{\Q}{\mathbb{Q}}
\newcommand{\Z}{\mathbb{Z}}
\newcommand{\SH}{\mathrm{SH}}
\newcommand{\so}{\mathfrak{so}}
\DeclareMathOperator{\td}{td}
\DeclareMathOperator{\Sym}{Sym}
\DeclareMathOperator{\Pic}{Pic}

\title[The Second Betti Number of Hyperk\"ahler Sixfolds]
{A Sharp Bound for the Second Betti Number\\of Hyperk\"ahler Sixfolds}
\author{Jie Fu}
\address{Jie Fu, Qiuzhen College, Tsinghua University, Beijing, China}
\email{fu-j21@mails.tsinghua.edu.cn}
\author{Bin Wang}
\address{Bin Wang, Hetao Institute of Mathematics and Interdisciplinary Sciences, Shenzhen, China}
\email{matph92@gmail.com}
\author{Shihao Wang}
\address{Shihao Wang, Qiuzhen College, Tsinghua University, Beijing, China}
\email{wangshihao25@mails.tsinghua.edu.cn}
\author{Zhiwei Zheng}
\address{Zhiwei Zheng, Yau Mathematical Sciences Center, Tsinghua University, Beijing, China}
\email{zhengzhiwei@mail.tsinghua.edu.cn}
\date{}

\begin{document}

\begin{abstract}
Let $X$ be a smooth compact irreducible hyperk\"ahler manifold of dimension six. We prove that the second Betti number satisfies $b_2(X) \le 23$. Furthermore, we show that the third cohomology vanishes whenever $b_2(X) \ge 19$, and that all odd cohomology vanishes when $b_2(X) = 23$. If the second Chern class lies in the subalgebra generated by $H^2(X,\Q)$, then $b_2(X) \in \{5,8,23\}$. The proof uses the Looijenga--Lunts--Verbitsky decomposition, curvature inequalities for Segre classes, and the Hodge--Riemann bilinear relations on the projective cotangent bundle.
\end{abstract}

\maketitle

\tableofcontents

\section{Introduction}\label{sec:introduction}

Let $X$ be a smooth compact irreducible hyperk\"ahler manifold of dimension six (an \emph{IHS sixfold} for short). By definition, $X$ is simply connected, K\"ahler, and the space of global holomorphic two-forms $H^0(X,\Omega_X^2)$ is spanned by a nondegenerate symplectic form. We denote its rational Betti numbers by $b_j(X) = \dim_\Q H^j(X,\Q)$. 

Establishing universal upper bounds on Betti numbers is a central open problem about hyperk\"ahler manifolds. In dimension six, the three known deformation types---the Hilbert scheme $\mathrm{K3}^{[3]}$, the generalized Kummer sixfold $\mathrm{Kum}_3$, and O'Grady's sixfold $\mathrm{OG6}$---exhibit second Betti numbers $b_2 = 23, 7$, and $8$, respectively. Consequently, any universal upper bound in dimension six must be at least $23$. The lower bound $b_2(X) \ge 3$ follows from $h^{2,0}(X) = h^{0,2}(X) = 1$ together with the existence of a K\"ahler class.

Cohomological relations in hyperk\"ahler geometry arise from multiple sources. First, in dimension six, Salamon's linear relation~\cite[Theorem~4.1]{Salamon} gives
\begin{equation}\label{eq:intro-salamon}
  b_6 = 6b_4 - 16b_3 + 30b_2 + 70.
\end{equation}
which follows from Hirzebruch--Riemann--Roch formula, the vanishing of $c_1(X)$, and the Hodge symmetries induced by the holomorphic symplectic form. Second, the total cohomology $H^*(X,\C)$ admits an action of the Looijenga--Lunts--Verbitsky (LLV) Lie algebra $\mathfrak{g} \simeq \so(b_2(X)+2,\C)$ (see~\cite[Proposition~4.5(i)]{LL} and~\cite[Theorem~1.4]{Verbitsky}), which imposes strong representation-theoretic constraints on the Betti vector.

In dimension four, Guan~\cite[Main Theorem]{Guan} proved that $b_2 \in \{3,4,5,6,7,8,23\}$, and that $b_2 = 23$ forces $(b_3, b_4) = (0, 276)$. More recently, Debarre, Huybrechts, Macr\`i, and Voisin~\cite[Theorem~1.5]{DHMV} proved that an irreducible hyperk\"ahler fourfold admitting classes $l,m \in H^2(X,\Z)$ with $\int_X l^4 = 0$ and $\int_X l^2m^2 = 2$ is of $\mathrm{K3}^{[2]}$ deformation type. Assuming the conjectural formula in~\cite[Conjecture~1.2]{BS} for the generalized Fujiki constant of $\operatorname{ch}_4$, Beckmann and Song~\cite[Corollary~1.3]{BS} obtained $b_2 \in \{5,6,7,23\}$.

In dimension six, Sawon~\cite[Section~3]{Sawon6} established $b_2 \le 23$ assuming that exterior-power summands of order at least four do not occur in the LLV decomposition. Kim and Laza~\cite[Main Theorem and Remarks~1.1--1.2]{KL} explained the limitation of earlier representation-theoretic arguments and obtained dimension-dependent bounds under the LLV highest-weight conjecture; for sixfolds, their bound $b_2 \le 23$ follows assuming $\sum_i |\mu_i| \le 3$ for the highest weight $\mu$ of every LLV summand. Green, Kim, Laza, and Robles~\cite[Theorem~1.2]{GKLR} determined the explicit LLV decompositions for all known deformation classes. They also established unconditional constraints on the first two highest-weight coordinates~\cite[Proposition~2.34]{GKLR} and formulated a stronger conjecture on the sum of all highest-weight coordinates~\cite[Conjecture~1.19]{GKLR}.

In this paper we prove the following unconditional bound.

\begin{theorem}\label{thm:main}
Let $X$ be a smooth compact irreducible hyperk\"ahler sixfold. Then
\begin{equation}
  3 \le b_2(X) \le 23.
\end{equation}
The upper bound is sharp and attained by manifolds of $\mathrm{K3}^{[3]}$ deformation type.
\end{theorem}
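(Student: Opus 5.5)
The lower bound is immediate from the introduction: $H^{2,0}\oplus H^{0,2}$ contributes two dimensions and a K\"ahler class gives an independent real $(1,1)$-class, so $b_2\ge 3$. Sharpness follows from $b_2(\mathrm{K3}^{[3]})=23$. The substance is the upper bound. Set $n=b_2(X)$. I would first use the LLV decomposition $H^*(X,\C)=\bigoplus_\mu V_\mu^{\oplus m_\mu}$ under $\mathfrak g\simeq\so(n+2,\C)$. The Verbitsky component $\mathrm{SH}(X)$ contributes $1,\,n,\,\dim\Sym^2 H^2,\,\dim\Sym^3 H^2$ to $b_0,b_2,b_4,b_6$. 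Every other summand has highest weight $\mu=(\mu_0,\mu_1,\dots)$ with $\mu_0\le 3$, and by \cite[Proposition~2.34]{GKLR} there are constraints on $\mu_0,\mu_1$. I would then express $b_2,b_3,b_4,b_6$ as nonnegative integer combinations of the graded dimensions of these finitely many types of representations. All other weights only enter through odd cohomology or through the contributions $n$, $\binom{n+1}{2}$, and so on.

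Next I would feed these expressions into Salamon's relation \eqref{eq:intro-salamon}. The quantity $b_6-6b_4+16b_3$ is cubic in $n$ for the Verbitsky part, while $30n+70$ is linear. For the Verbitsky component alone,
\[
\binom{n+2}{3}+\binom{n}{1}\cdot(\ldots)
\]
grows like $n^3/6$, so the relation forces either $n$ to be small or large contributions from the extra summands. Each extra summand pairs a positive contribution to $b_4$ or $b_3$ with a contribution to $b_6$. If one only uses representation theory, summands with large $|\mu|$ can balance the equation for arbitrarily large $n$; this is the gap identified by Kim--Laza. So I need a second, independent inequality.

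That inequality should come from geometry, as the abstract indicates. I would use Hodge--Riemann bilinear relations on $\mathbb P(T^*X)$ applied to the tautological class $\xi$ and a K\"ahler class pulled back from $X$, together with the positivity of Segre classes coming from curvature of the hyperk\"ahler metric. Via Riemann--Roch and Fujiki relations, this should yield a sign or inequality on the Chern numbers $c_2^3$, $c_2c_4$, $c_6$ and on the generalized Fujiki constants of $c_2$ and $c_4$. Through Salamon-type identities, $c_6=\chi(X)$ and the $\hat A$-genus constraint ($\chi(\mathcal O_X)=4$) become linear in the Betti numbers. Combining them with the LLV expressions would give an upper bound on $b_4$ or $b_6$ that is quadratic in $n$. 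Substituting that bound back into Salamon's equation should leave a polynomial inequality in $n$ whose largest solution is $23$. The odd contributions $b_3$ would be controlled in parallel, using the fact that $H^3$ generates LLV summands of large dimension, $\geq n+2$ times its rank.

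The main obstacle will be the geometric inequality. I need to obtain a bound on the Segre/Chern numbers that is strong enough, and to convert it into a statement about the LLV multiplicities, in particular excluding summands with large highest weight. I would first try to establish positivity of $\int s_2(\Omega_X)\,\omega^4$ and of the mixed Hodge--Riemann form on the primitive part of $H^{2}$ classes lifted to $\mathbb P(T^*X)$. After that, the proof would check the resulting finite case analysis over the admissible $\mu$ with $\mu_0\le3$.
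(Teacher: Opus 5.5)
The lower bound and the sharpness argument are fine, but the upper bound does not close as outlined.

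\textbf{The curvature input is misidentified.} Since $s_2=-c_2$, you have $\int_X s_2\,\omega^4=-\int_X c_2\,\omega^4<0$, so the positivity you propose has the wrong sign. It is also empty, because $\int_X c_2h^4=\tfrac{6R}{5}q_X(h)^2$ is already forced by Riemann--Roch and Nieper's normalization. The signs that carry information are $C_X(s_4)\ge 0$ and $\int_X s_6\le 0$. To use them you need the generalized Fujiki constants of $c_2^2$ and $c_4$ in terms of $R$, obtained from the symmetry of the Riemann--Roch polynomial and Beckmann--Song's relation. This turns the two signs into $R\ge 1440$ and $K\le 0$. You then need formulas for $R$ and $K$ through $\chi(\Omega^1_X)$ and $\chi(\Omega^2_X)$. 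These depend on Hodge numbers such as $h^{3,1}$ of each LLV summand, not only on Betti numbers.

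\textbf{The final substitution step cannot give $23$.} Even the correct numerical system excludes $b_2\ge 25$ but not $b_2=24$. That system consists of the bound $\mu_0+\mu_1\le 2$, Salamon's relation, Riemann--Roch, both Segre signs, and the spin/vector-spin control of odd cohomology. For example, take
\[
H^{\mathrm{even}}\simeq\SH\oplus\Lambda^4V\oplus(\Lambda^3V)^{\oplus 3}\oplus\mathbf{1}^{\oplus 56},\qquad H^{\mathrm{odd}}=0,\qquad b_2=24.
\]
This satisfies Salamon's relation and gives $R=2077$ and $K=-192$, so every numerical constraint holds. Hence no polynomial inequality in $b_2$ with largest solution $23$ follows from these ingredients.

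\textbf{The missing idea is a geometric exclusion of $b_2=24$.} Hodge--Riemann on $Y=\mathbb{P}(\Omega_X)$ needs a K\"ahler class. But $\zeta=c_1(\mathcal{O}_Y(1))$ is not nef: a nef $\Omega_X\cong T_X$ with $c_1=0$ would be numerically flat, forcing $c_2=0$. So $\zeta+h$ is K\"ahler a priori only for $h$ large, whereas the contradiction needs a small value; the paper uses $q_X(h)=3/4$.

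The paper obtains this K\"ahler class in four steps.
\begin{enumerate}
\item Pass to a very general deformation. Mumford--Tate genericity makes every subvariety trianalytic, with hyperk\"ahler normalization whose factors receive $H^2(X)$ injectively.
\item Since $b_2\ge 24$, this excludes all proper positive-dimensional subvarieties: K3 surfaces, fourfolds by Guan's bound $b_2\le 23$, and tori.
\item By Anella--H\"oring, every proper subvariety of $Y$ then lies in a fibre.
\item Demailly--P\u{a}un, together with positivity of $\int_Y(\zeta+h)^{11}$ along a ray (which uses $R\ge 2077$ and $K\ge -624$), shows that $L=\zeta+h$ is K\"ahler at $q_X(h)=3/4$.
\end{enumerate}

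Only then is Hodge--Riemann applied, and in degree $(2,2)$ rather than to classes coming from $H^2$. The form $\int_Y\alpha\beta L^7$ must have negative index one on $\mathrm{span}\{\zeta^2,\zeta h,h^2,Q\}$, where $Q$ is the dual BBF class. Yet for $2077\le R\le 2170$ and $K\le 0$ it is negative definite on the plane spanned by $\zeta^2+\zeta h$ and $-4\zeta^2+6\zeta h-Q$. Without this step, or a substitute for it, your argument at best reaches $b_2\le 24$.
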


The same methods give restrictions on odd cohomology when $b_2(X)$ is large.

\begin{theorem}\label{thm:oddendpoint}
Let $X$ be a smooth compact irreducible hyperk\"ahler sixfold.
\begin{enumerate}
  \item[\textup{(i)}] If $b_2(X) \ge 19$, then $H^3(X,\Q) = 0$.
  \item[\textup{(ii)}] If $b_2(X) = 23$, then $H^{\mathrm{odd}}(X,\Q) = 0$.
\end{enumerate}
\end{theorem}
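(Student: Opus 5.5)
We would derive both parts from the LLV decomposition combined with Salamon's relation \eqref{eq:intro-salamon} and the positivity inequalities for Betti numbers that the paper obtains from Segre-class curvature bounds and Hodge--Riemann relations on $\mathbb{P}(T^*X)$. Write $n=b_2(X)$ and $\mathfrak g\simeq\so(n+2,\C)$.

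The odd cohomology $H^{\mathrm{odd}}(X,\C)$ is a $\mathfrak g$-submodule. Its degree-$3$ piece $H^3$ is the lowest-degree part of the odd module. Each irreducible summand meeting $H^3$ has highest weight $(1,1,0,\dots)$ or $(\tfrac32,\tfrac12,\dots,\tfrac12)$ in the $\mathfrak{so}$-conventions of \cite{GKLR}, the latter being the spinor-type summand. By \cite[Proposition~2.34]{GKLR}, the first coordinate is at most $\tfrac32$ in dimension six, and the grading forces the odd summands into a small explicit list. Listing these possibilities, a nonzero $H^3$ forces a summand $V$ whose restriction to $H^3$ is either a spinor representation of $\so(n-2)\oplus\so(2,1)$-type, of dimension $2^{\lfloor n/2\rfloor}$-ish, or a module of the form $\C^{n}\otimes W$. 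Its contribution to $(b_3,b_5,b_7,\dots)$ can then be computed explicitly as a function of $n$.

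Next we insert these contributions, together with the even part of the decomposition (which contains the Verbitsky component $\Sym^{[3]}$), into Salamon's relation. We combine this with the lower bounds for $b_4$ and $b_6$ that come from the Verbitsky component and any further even summands, and with the paper's curvature inequality. The inequality we intend to use comes from Hodge--Riemann on $\mathbb P(T^*X)$ and the Segre integrals $\int s_6$, $\int c_2 s_4$, which are expressible through $b_j$ via Riemann--Roch. Heuristically, $b_3$ enters Salamon's relation with a large negative coefficient, while odd summands of spinor type grow exponentially in $n$. For $n\ge19$, any nonzero odd summand should make $b_6$ either exceed the bound permitted by the curvature inequality or contradict the parity and integrality constraints. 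This forces $b_3=0$ and proves (i).

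For (ii), with $n=23$ and $b_3=0$, any remaining odd summand would have lowest degree $5$. By hard Lefschetz and the $\mathfrak g$-symmetry this is impossible: an $\mathfrak{sl}_2$-string starting in degree $5$ must end in degree $7$, and the LLV weight of such a module would have to be centered at degree $6$ with odd length. Making this precise requires that odd summands have weights with half-integer coordinates whose $h$-eigenvalue range is symmetric around the middle. A summand with lowest degree $5$ would need first coordinate $\tfrac12$ and spinor dimension $2^{11}$ in degree $5$. Feeding this into the Euler-characteristic and Salamon constraints, together with the bound from Theorem~\ref{thm:main} at the endpoint, where the inequality is an equality, gives a contradiction. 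We therefore expect to use that at $n=23$ all curvature inequalities are sharp, so the Betti vector equals that of $\mathrm{K3}^{[3]}$ in even degrees and no room remains for odd classes.

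The main obstacle is the first step: rigorously excluding spinor-type odd summands with large first coordinate without assuming the LLV highest-weight conjecture. We would first try to bound their multiplicity using the positivity of $\int_X c_2\cdot(\text{Hodge--Riemann form})$ restricted to $H^3$.
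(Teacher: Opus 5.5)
\textbf{Part (i).} The proposal has genuine gaps in both parts. In (i) the decisive step is only asserted (``should make $b_6$ exceed the bound''), and several of the ingredients you name would not do the job.

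\emph{Representation theory.} A summand meeting $H^3$ has half-integral highest weight with $\mu_0=\tfrac32$. The weight $(1,1,0,\dots)$ is $\Lambda^2V$, which sits in degrees $4,6,8$. What you call the main obstacle is settled by the unconditional bound $\mu_0+\mu_1\le 2$ of Green--Kim--Laza--Robles, together with half-integrality and dominance. Only spin and vector-spin modules occur, so $b_3=d_{b_2}u$ and $b_5=d_{b_2}(b_2u+v)$ with $d_{b_2}=2^{\lfloor(b_2-1)/2\rfloor}$.

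\emph{Source of the inequality.} It cannot come from Hodge--Riemann on $\mathbb{P}(\Omega_X)$: the K\"ahler class $\zeta+h$ exists only on a very general deformation with no subvarieties, and that needed $b_2\ge 24$. What is needed are the curvature signs $\int_X s_4h^2\ge 0$ and $\int_X s_6\le 0$, i.e.\ $R\ge 1440$ and $K\le 0$. One then rewrites $R$ and $K$ by HRR through $\chi^1,\chi^2$, hence through LLV multiplicities. This involves $h^{3,1}$, not Betti numbers alone.

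\emph{Elimination step.} The unknown multiplicities $m_k$ of $\Lambda^kV$ must be eliminated against Salamon's relation with a multiplier $\gamma$ such that $\mathcal{R}(E_k)\le\gamma\operatorname{Sal}(E_k)$ for all $k$. Taking $\gamma=2/3$ gives $R\le U(b_2)-\tfrac{3b_2-47}{6}b_3$ for $b_2\ge 20$, which contradicts $R\ge 1440$ once $b_3\ge 1024$.

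\emph{Parity.} At $b_2\in\{19,20\}$ one has $d_{b_2}=512$. The inequalities then do not exclude $u=1$; at $b_2=19$ the $K\le 0$ budget only gives $u\le 1$. You need $u$ to be even. This comes from the $\mathfrak g$-invariant symmetric signed Poincar\'e pairing on $H^{\mathrm{odd}}$, combined with the alternating invariant form on $V\otimes S$ for $\so(21,\C)$, respectively the duality of the half-spin modules for $\so(22,\C)$. Your ``parity and integrality constraints'' do not supply this.

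\textbf{Part (ii).} Your Lefschetz argument is false. A spin module $(\tfrac12,\dots,\tfrac12)$ occupies exactly degrees $5$ and $7$, symmetric about $6$, and is a legitimate LLV summand; nothing in the $\mathfrak g$-symmetry excludes it.

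Nor are the inequalities sharp at $b_2=23$. Theorem~\ref{thm:main} gives no equality there. The even Betti vector of $\mathrm{K3}^{[3]}$ is obtained only under the extra hypothesis $c_2\in\SH^4(X,\Q)$ of Theorem~\ref{thm:pure}, so you cannot assume it unconditionally.

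Note also that $b_5$ does not appear in Salamon's relation at all. It enters only through $\chi^2=2+2h^{2,2}-\tfrac{b_3+b_5}{2}$, i.e.\ through $R$ and $K$, and the cap comes from $K\le 0$.

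The missing argument runs as follows. With $b_3=0$ from (i), one has $b_5=2^{11}v$. Combine the $K\le 0$ inequality with Salamon's relation via the multiplier $\eta_{23}=6/29$, which satisfies $\eta_{23}\operatorname{Sal}(E_k)\ge\mathcal{R}(E_k)+h^{3,1}(E_k)$ for every $k$. This gives $b_5\le 1760<2048$, hence $v=0$. Then $b_1=0$ and Poincar\'e duality kill all odd cohomology.
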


More precisely, we establish that $b_5(X) \in \{0, 1024, 2048\}$ when $b_2(X) = 19$, and $b_5(X) \in \{0, 1024\}$ for $20 \le b_2(X) \le 22$; see Corollary~\ref{cor:fifthbetti}.

Let $\SH^4(X,\Q)$ denote the image of $\Sym^2 H^2(X,\Q)$ inside $H^4(X,\Q)$ under the cup product. Under the purity hypothesis $c_2(X) \in \SH^4(X,\Q)$, the equality case of Beckmann--Song's inequality~\cite[Proposition~2.3]{BS} gives arithmetic restrictions on $b_2(X)$. At $b_2(X) = 23$, this hypothesis determines the full Betti vector.

\begin{theorem}\label{thm:pure}
Let $X$ be a smooth compact irreducible hyperk\"ahler sixfold. If $c_2(X) \in \SH^4(X,\Q)$, then
\begin{equation}
  b_2(X) \in \{5, 8, 23\}.
\end{equation}
If, in addition, $b_2(X) = 23$, then the full Betti vector of $X$ is uniquely determined as
\begin{equation}
  (b_0, \ldots, b_{12}) = (1, 0, 23, 0, 299, 0, 2554, 0, 299, 0, 23, 0, 1).
\end{equation}
\end{theorem}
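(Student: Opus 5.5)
We plan to combine the Beckmann--Song equality case with Riemann--Roch and the LLV constraints. Set $b=b_2(X)$ and let $q$ be the Beauville--Bogomolov--Fujiki form, with Fujiki constant $c_X$ so that $\int_X \alpha^6 = c_X q(\alpha)^3$. Since $\Sym^2 H^2$ maps onto $\SH^4$ and the only $\so(H^2,q)$-invariant in $\Sym^2 H^2$ is the dual form $q^\vee$, the hypothesis $c_2(X)\in\SH^4(X,\Q)$ together with monodromy invariance of $c_2$ gives $c_2(X)=\lambda\,\mathfrak q$, where $\mathfrak q$ is the image of $q^\vee$ in $H^4$. The first step is to express the generalized Fujiki constants $\int c_2\alpha^4=C(c_2)q(\alpha)^2$, $\int c_2^2\alpha^2$, $\int c_2^3$, $\int c_4\alpha^2$ (via $c_4$'s projection to the $\Sym$-part) in terms of $\lambda$, $c_X$ and $b$. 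This uses the standard identities $\int \mathfrak q\,\alpha^4 = \frac{(b+4)}{\cdot}c_X q(\alpha)^2$ type formulas, obtained by contracting $\int\alpha_1\cdots\alpha_6$ with $q^\vee$.

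The second step is the arithmetic. By \cite[Proposition~2.3]{BS}, the Beckmann--Song inequality $C(c_2)^2\le(\ldots)$, comparing $\int c_2\alpha^4$ with $\int c_2^2\alpha^2$ via Cauchy--Schwarz on the projection to $\SH^4$, becomes an equality exactly when $c_2\in\SH^4$. Writing this equality in terms of $\lambda,b$ yields one relation. We then feed it into Riemann--Roch: Nieper-Wi\ss kirchen's formula $\chi(L)=\int \exp\!\big(-2\sum s_{2k}\ldots\big)$, or more concretely the requirement that $\chi(\mathcal O_X)=4$ and that the polynomial $\chi(L)$ in $q(L)$ has the right form. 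Together with the integrality of $\int \td^{1/2}$ and Salamon's relation \eqref{eq:intro-salamon}, this reduces to a Diophantine equation in $b$. We expect the natural candidate to be something like $\int c_2^3$ and $\int c_2 c_4$ both expressed through $\lambda,b$ and $c_X$, with $\chi(\mathcal O_X)=\frac{1}{60480}\int(\ldots)=4$ and the vanishing of the odd-degree $\hat A$-contributions. We would solve for $\lambda$ and check which $b\in[3,23]$ (the range from Theorem~\ref{thm:main}) give rational, positivity-compatible solutions, expecting exactly $\{5,8,23\}$. This elimination is the main obstacle. The relations from Riemann--Roch involve $c_4$ and $c_6$, which are not a priori in $\SH$, so the key is to use only the $\SH$-projections (which are determined by generalized Fujiki constants) together with \cite[Proposition~2.3]{BS}. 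If this is insufficient, we would add the curvature/Segre inequalities mentioned in the abstract as further constraints.

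For the second assertion at $b=23$, Theorem~\ref{thm:oddendpoint}(ii) gives $b_{\mathrm{odd}}=0$. Then the LLV decomposition of $H^*$ under $\so(25)$ contains the Verbitsky component $\SH^*$, which has $(1,23,\dim\SH^4,\dim\SH^6,\ldots)$ with $\dim\SH^4=\binom{24}{2}=276$. Any further summand must start in degree $\ge4$, and in degree 4 it is a $\so(23)$-module. Since the $\lambda$ determined above pins $c_2$ and the Riemann--Roch data, the Euler characteristic $\int c_6$ becomes known, hence $b_6$ is expressed through $b_4$. Salamon's relation gives $b_6=6b_4+30\cdot23+70$. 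Combining these two linear relations in $(b_4,b_6)$ yields $b_4=299$ and $b_6=2554$. We must also check consistency with the LLV structure: the extra $23$ classes in $H^4$ should form the standard representation, extended to a summand $V$ of $\so(25)$ shifted, namely $V_{(1)}$ with dimension 25 spread as $1+23+1$ in degrees $4,6,8$. Verifying that $299-276=23$ and $2554-\dim\SH^6$ match this is a routine check.
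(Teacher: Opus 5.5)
There is a genuine gap in the first assertion. The step you call ``the main obstacle'' is where all the content lies, and none of the criteria you propose can carry it out.

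Once $c_2=\lambda Q$, all relevant numbers are forced rationally for \emph{every} $b$:
\begin{itemize}
\item $C_X(c_2)=\frac{6R}{5}$ and $\int_X Qh^4=\frac{(b+4)R}{5}q_X(h)^2$ give $\lambda=\frac{6}{b+4}$.
\item The Beckmann--Song equality case gives $R=\frac{2880(b+4)}{b+1}$. This needs their identity $7C_X(c_2^2)-4C_X(c_4)=12R$, not just Riemann--Roch; otherwise $R$ stays free.
\item Riemann--Roch, $3C_X(c_2^2)-C_X(c_4)=4R+5760$, then gives $C_X(c_4)=\frac{3456(b-3)}{b+1}$.
\end{itemize}
Since $\int_X c_2c_4=\lambda\,b\,C_X(c_4)$, only $\SH$-data enter, as you anticipated, and
\[
\int_X c_2^3=\frac{41472\,b(b+2)}{(b+1)(b+4)},\qquad \int_X c_2c_4=\frac{20736\,b(b-3)}{(b+1)(b+4)}.
\]
So none of your proposed filters eliminates anything:
\begin{itemize}
\item Rational solutions exist for all $b$.
\item Positivity and the Segre inequalities exclude nothing: $R>2880$ and $K<0$ hold for every $b\ge 3$.
\item $\int_X\sqrt{\td_X}$ is not an integer; it equals $9/16$ for $\mathrm{K3}^{[3]}$.
\item Integrality of $R=2880+\frac{8640}{b+1}$ only gives $(b+1)\mid 8640$, which still admits $b=3,4,7,9,11,14,15,17,19$.
\end{itemize}

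The missing idea is integrality of $\chi(X,\Omega^1_X)$, equivalently of the Chern numbers. Hirzebruch--Riemann--Roch for $\mathcal{O}_X,\Omega^1_X,\Omega^2_X$ (Lemma~\ref{lem:chernchi}) gives $\int_X c_2^3-\int_X c_2c_4=23616+24\chi(\Omega^1_X)$, hence
\[
\chi(\Omega^1_X)=-120-\frac{1728}{b+1}+\frac{3456}{b+4}\in\Z .
\]
This forces $(b+1)\mid 5184$ and $(b+4)\mid 10368$. In the range $4\le b+1\le 24$ only $b+1\in\{6,9,24\}$ survive.

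A smaller point concerns the step $c_2=\lambda Q$. Use the fact that the reduced LLV algebra $\so(H^2,q_X)$ annihilates Chern classes (Looijenga--Lunts), as the paper does. Monodromy invariance gives $\so$-invariance only after a Zariski-density input (Verbitsky's finite-index theorem plus Borel density). That input is unavailable for $b_2=3$, where $q_X$ is definite and the monodromy group is finite.

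For $b_2=23$, your route is correct once the formulas above are in hand, and it differs from the paper's. From $\int_X c_2^3=36800$, $\int_X c_2c_4=14720$ and $\chi(\mathcal{O}_X)=4$ you get $\int_X c_6=3200$. With vanishing odd cohomology this gives $48+2b_4+b_6=3200$. Together with Salamon's $b_6=6b_4+760$, you obtain $b_4=299$ and $b_6=2554$.

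The paper instead inserts $R=3240$ into \eqref{eq:Smodules} and \eqref{eq:Salamonmodules}, obtaining $\sum_i m_i\operatorname{Sal}(E_i)+\tau=116$ and $\sum_i m_i\mathcal{R}(E_i)=23$. A termwise comparison then forces $H^{\mathrm{even}}(X,\C)\simeq\SH\oplus\Lambda^2V$. That route also yields the LLV decomposition; your Euler-characteristic argument is shorter if only Betti numbers are wanted.

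Your proposed LLV consistency check is wrong, however. The extra summand is $\Lambda^2V$, contributing $(23,254,23)$ to $(b_4,b_6,b_8)$; note $2554-\binom{25}{3}=254$. The standard module $V$ contributes only $(1,23,1)$ and cannot account for $299-276=23$.
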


We note that the condition $c_2(X) \in \SH^4(X,\Q)$ holds for $\mathrm{K3}^{[3]}$ and $\mathrm{OG6}$, but fails for $\mathrm{Kum}_3$; see~\cite[Theorem~1.1 and Examples~2.12--2.13]{BS}.

To prove Theorem~\ref{thm:main}, we assume for contradiction that $b_2(X) \ge 24$. We express $H^*(X,\C)$ as a direct sum of irreducible LLV representations. Salamon's relation and the Hirzebruch--Riemann--Roch theorem yield a system of linear equations in the non-negative integer multiplicities of these summands. Differential-geometric considerations provide two sign constraints on the Segre numbers: for any K\"ahler class $h$, the horizontal curvature properties of hyperk\"ahler metrics imply that $\int_X s_4(T_X) h^2 \ge 0$ and $\int_X s_6(T_X) \le 0$. Setting $R = 5760 \int_X \sqrt{\td_X}$ and $K = \int_X s_6(T_X)$, these inequalities give $R \ge 1440$ and $K \le 0$. Together with the multiplicity constraints, they eliminate all values $b_2(X) \ge 25$ and leave the remaining case
\[
  b_2(X) = 24, \qquad 2077 \le R \le 2170, \qquad -624 \le K \le 0.
\]

To exclude $b_2(X) = 24$, we pass to a very general deformation of $X$. On such a deformation, every proper irreducible compact analytic subvariety of $Y = \mathbb{P}(\Omega_X)$ is contained within a fiber of the natural projection $Y \to X$. Combining this property with the numerical bounds on $R$ and $K$, we use the numerical criterion of Demailly and P\u{a}un~\cite[Theorem~0.1]{DemaillyPaun} to construct a K\"ahler class $L = \zeta + h$, where $\zeta = c_1(\mathcal{O}_Y(1))$ and $h$ is K\"ahler with $q_X(h) = 3/4$. By the Hodge--Riemann bilinear relations, the Lefschetz intersection form $\int_Y \alpha \beta L^7$ must have negative index one on the four-dimensional subspace generated by $\zeta^2, \zeta h, h^2$, and the dual Beauville--Bogomolov--Fujiki class. An explicit computation using the bounds on $R$ and $K$ produces a negative-definite two-plane inside this subspace, yielding the desired contradiction.

For the vanishing of $H^3$, the signed Poincar\'e pairing forces the relevant odd multiplicities to be even at $b_2 = 19,20$. Together with the Segre inequalities and the LLV identities, this excludes nonzero third cohomology for $b_2 \ge 19$. The same identities give the bounds on $b_5$.

\medskip
\noindent\textbf{Recent developments on SYZ Conjecture.}
Engel, Filipazzi, Greer, Mauri, and Svaldi~\cite[Theorem~B and Corollary~1.3]{EFGMS} established the finiteness of locally trivial deformation types in fixed dimension for primitive symplectic varieties with Lagrangian fibrations. Their Corollary~1.3 extends this conclusion to $b_2 \ge 5$ under the hyperk\"ahler SYZ conjecture. Recent preprints by OpenAI~\cite[Theorem~1.1 and Corollary~1.2]{OAISYZ} and Engel--Mauri~\cite[Theorem~1.1 and Corollary~1.2]{EMSYZ} present proofs of SYZ statements and the corresponding finiteness consequences. The proofs in this paper do not use these SYZ results.

\medskip
\noindent\textbf{Structure of the paper.}
Section~\ref{sec:curvature} establishes the curvature inequalities for Segre classes and computes the normalized Fujiki constants.
Section~\ref{sec:llv} describes the possible LLV summands, expresses the Betti and Hodge numbers in terms of their multiplicities, and establishes parity constraints on odd cohomology.
Section~\ref{sec:reduction} proves the vanishing of $H^3$ for $b_2 \ge 19$ and establishes $b_2 \le 24$.
Section~\ref{sec:geometry} constructs the K\"ahler class on $\mathbb{P}(\Omega_X)$ and excludes the endpoint $b_2 = 24$ via the Hodge--Riemann relations.
Section~\ref{sec:endpoint} details the bounds on $b_5$ and completes the proofs of Theorems~\ref{thm:oddendpoint} and~\ref{thm:pure}.

\medskip
\noindent\textbf{Acknowledgements.}
This research was conducted in collaboration with large language model AI systems, which assisted in exploring proof strategies, refining proofs, verifying calculations, and identifying relevant literature. All contents are verified and finalized by the authors, who take full responsibility.

\section{Segre Classes and Fujiki Constants}\label{sec:curvature}

We establish sign constraints on the Segre numbers from hyperk\"ahler curvature and compute the generalized Fujiki constants in a fixed normalization. These formulas give the numerical inequalities used in the subsequent LLV analysis.

Throughout this section, let $X$ denote an IHS sixfold. We write $c_j = c_j(T_X)$ for its Chern classes and suppress explicit reference to $X$ when context allows. Recall that the formal Chern roots $x_1, \ldots, x_6$ determine the Todd class
\begin{equation}
  \td_X = \td(T_X) = \prod_{i=1}^6 \frac{x_i}{1 - e^{-x_i}},
\end{equation}
and we define $\sqrt{\td_X}$ as the unique formal square root whose degree-zero term is $1$. The Segre classes $s_j(T_X) \in H^{2j}(X,\Q)$ are generated by the formal inverse of the total Chern class,
\begin{equation}
  s(T_X) = 1 + \sum_{j=1}^6 s_j(T_X) = c(T_X)^{-1}.
\end{equation}
Because the tangent bundle $T_X$ admits a nondegenerate holomorphic symplectic form, its odd Chern classes vanish in rational cohomology. In terms of the even Chern classes, the Segre classes take the form
\begin{equation}
  s_2 = -c_2, \qquad s_4 = c_2^2 - c_4, \qquad s_6 = -c_2^3 + 2c_2 c_4 - c_6.
\end{equation}
We define two fundamental numerical invariants:
\begin{equation}
  R = 5760 \int_X \sqrt{\td_X}, \qquad K = \int_X s_6(T_X).
\end{equation}
By Sawon~\cite[Section~2]{SawonTop}, the degree-twelve component of $\sqrt{\td_X}$ is
\begin{equation}
  \bigl[\sqrt{\td_X}\bigr]_{12} = \frac{31c_2^3 - 44c_2c_4 + 16c_6}{967680},
\end{equation}
which implies
\begin{equation}\label{eq:Rchern}
  R = \frac{1}{168} \int_X \bigl(31c_2^3 - 44c_2c_4 + 16c_6\bigr).
\end{equation}
It is a theorem of Hitchin and Sawon~\cite[formula~(1) and Section~6]{HS} that $R > 0$ for every irreducible hyperk\"ahler manifold.

\begin{definition}
Let $\sigma \in H^0(X, \Omega_X^2)$ be a holomorphic symplectic form normalized such that $\int_X (\sigma \bar{\sigma})^3 = 1$. The associated quadratic form on $H^2(X,\R)$ is defined by
\begin{equation}
  q_\sigma(h) = \frac{3}{2}\int_X h^2 (\sigma \bar{\sigma})^2 - 2 \left(\int_X h \sigma^2 \bar{\sigma}^3\right)\left(\int_X h \sigma^3 \bar{\sigma}^2\right).
\end{equation}
The \emph{Beauville--Bogomolov--Fujiki (BBF) form} $q_{\mathrm{BBF}}$ is the unique positive scalar multiple of $q_\sigma$ whose symmetric bilinear polarization is integral and primitive on $H^2(X,\Z)$.
\end{definition}
By Beauville~\cite[Th\'eor\`eme~5(a) and its proof]{Beauville}, $q_{\mathrm{BBF}}$ has signature $(3, b_2(X)-3)$ and is positive on the K\"ahler cone.

\begin{definition}
Following Nieper~\cite[Definition~17]{Nieper}, the \emph{characteristic quadratic form} $\lambda_X$ on $H^2(X,\R)$ is defined by
\begin{equation}
  \lambda_X(h) =
  \begin{cases}
    \displaystyle \frac{12}{5} \frac{\int_X h^6}{\int_X c_2 h^4}, & \text{if } \int_X c_2 h^4 \ne 0,\\[3mm]
    0, & \text{if } \int_X c_2 h^4 = 0.
  \end{cases}
\end{equation}
\end{definition}
Nieper~\cite[Proposition~10]{Nieper} proved that $\lambda_X$ is a positive multiple of $q_{\mathrm{BBF}}$. Moreover, Nieper~\cite[formula~(5.18)]{Nieper} established that
\begin{equation}
  \frac{1}{720}\int_X h^6 = \left(\int_X \sqrt{\td_X}\right) \lambda_X(h)^3.
\end{equation}
Throughout the paper, we work with the convenient normalization $q_X := \lambda_X / 2$. This form is a positive multiple of the primitive integral BBF form, and the Fujiki constants below are taken with respect to $q_X$. Under this choice, the top self-intersection formula simplifies to
\begin{equation}\label{eq:fujiki-normalized}
  \int_X h^6 = R \, q_X(h)^3.
\end{equation}
Hence, the scalar $R$ coincides with the Fujiki constant associated with the normalized form $q_X$.

We denote by $\mathbb{P}(T_X)$ and $\mathbb{P}(\Omega_X)$ the projectivizations of the tangent and cotangent bundles of $X$, respectively. Since $X$ is a hyperk\"ahler manifold, they are isomorphic as projective bundles over $X$.

\begin{lemma}\label{lem:projectivebundle}
Let $X$ be an IHS sixfold, and let $\pi \colon Y = \mathbb{P}(\Omega_X) \to X$ denote the projective cotangent bundle. Under the natural identification of $\mathbb{P}(T_X)$ with $Y$ over $X$, let $\mathcal{O}_Y(1)$ be the dual of the tautological line bundle on $\mathbb{P}(T_X)$, and write $\zeta = c_1(\mathcal{O}_Y(1))$. Then
\begin{equation}
  \pi_*(\zeta^{5+j}) = s_j(T_X), \qquad \text{for all } 0 \le j \le 6.
\end{equation}
\end{lemma}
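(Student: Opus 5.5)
The plan is to use the Grothendieck relation defining Chern classes on a projective bundle, together with the fact that pushing forward powers of $\zeta$ inverts the total Chern class. Two conventions need fixing first. Take $Y=\mathbb{P}(T_X)$, the bundle of lines in $T_X$, with tautological subbundle $\mathcal{O}(-1)\subset\pi^*T_X$. Its dual $\mathcal{O}_Y(1)$ has first Chern class $\zeta$. There is a sign subtlety here, and I address it at the end.

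\textbf{Step 1: the Grothendieck relation.} On $\mathbb{P}(T_X)$ there is the tautological exact sequence $0\to\mathcal{O}(-1)\to\pi^*T_X\to Q\to 0$, with $Q$ of rank $5$. Since $c(\mathcal{O}(-1))=1-\zeta$, this gives $c(Q)=\pi^*c(T_X)\,(1-\zeta)^{-1}$. Because $Q$ has rank $5$, the component of degree $6$ vanishes:
\[
\sum_{i=0}^{6}\pi^*c_i(T_X)\,\zeta^{6-i}=0 .
\]
By the splitting principle, the standard statement is that $\pi_*(\zeta^{5+j})$ is the $j$-th Segre class of $T_X$ in the convention inverse to $c(T_X^\vee)$, or to $c(T_X)$, depending on the sign of $\zeta$. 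In the present normalization the relation has all plus signs. I would verify this via the generating identity
\[
\pi_*\bigl((1-t\zeta)^{-1}\bigr)=t^{5}\,c_{1/t}(T_X)^{-1}
\]
in the sense of Fulton, Chapter 3. Concretely, I argue by induction on $j$. Since the fibres are $\mathbb{P}^5$, we have $\pi_*\zeta^{k}=0$ for $k<5$ and $\pi_*\zeta^5=1$. Multiply the relation by $\zeta^{j-1}$ and push forward using the projection formula. This gives $\sum_i c_i\,\pi_*\zeta^{5+j-i}=0$ for $j\ge 1$, which is exactly the recursion for the coefficients of $c(T_X)^{-1}$. Hence $\pi_*\zeta^{5+j}=s_j(T_X)$.

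\textbf{Step 2: the sign issue.} This is the only real obstacle. With $\mathcal{O}(-1)\subset\pi^*T_X$, the relation is $\sum_i c_i(\pi^*T_X)\,(-\zeta_-)^{6-i}$ in terms of $\zeta_-=c_1(\mathcal{O}(-1))=-\zeta$. Depending on conventions, this can produce $c(T_X^\vee)^{-1}$ instead of $c(T_X)^{-1}$, so a sign $(-1)^j$ appears relative to $c(T_X)^{-1}$. The hyperkähler structure settles this. The symplectic form gives $T_X\cong\Omega_X=T_X^\vee$, so $c(T_X^\vee)=c(T_X)$, and indeed the odd Chern classes vanish rationally, as noted above. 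Therefore both candidate formulas agree. This also explains why identifying $\mathbb{P}(T_X)$ with $\mathbb{P}(\Omega_X)$ causes no ambiguity: for odd $j$ both sides are $0$, and for even $j$ the sign $(-1)^j$ is trivial.

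\textbf{Step 3: the range of $j$.} For $0\le j\le 6$ the degrees match, since $\zeta^{5+j}$ has degree $2(5+j)$ on $Y$, of dimension $11$, and pushes forward to degree $2j$ on $X$. This completes the plan.
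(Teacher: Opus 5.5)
Your proof is correct and follows the paper's argument: you use the vanishing of $c_6$ of the rank-five quotient $\pi^*T_X/\mathcal{O}(-1)$ to get $\sum_i \pi^*c_i(T_X)\zeta^{6-i}=0$, then push forward $\zeta^{j-1}$ times this relation, using $\pi_*\zeta^5=1$ and $\pi_*\zeta^k=0$ for $k<5$, to obtain the recursion for $c(T_X)^{-1}$. Step~2 is a harmless extra check, since the signs are already right in Step~1; note also that the generating identity you quote should read $\pi_*\bigl((1-t\zeta)^{-1}\bigr)=t^5c_t(T_X)^{-1}$ rather than $t^5c_{1/t}(T_X)^{-1}$, but your induction does not depend on it.
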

\begin{proof}
Consider the Euler sequence on $Y$. The quotient bundle $\pi^* T_X / \mathcal{O}_Y(-1)$ has rank five; hence, its sixth Chern class vanishes identically:
\begin{equation}
  \zeta^6 + \pi^* c_1(T_X)\zeta^5 + \dots + \pi^* c_6(T_X) = 0.
\end{equation}
Fibre integration yields $\pi_*(\zeta^5) = 1$ and $\pi_*(\zeta^\ell) = 0$ for all $0 \le \ell < 5$. Multiplying the vanishing relation by $\zeta^{j-1}$ and applying pushforward gives
\begin{equation}
  \sum_{i=0}^j c_i(T_X) \pi_*(\zeta^{5+j-i}) = 0, \qquad 1 \le j \le 6.
\end{equation}
Comparing this recurrence relation with the identity $c(T_X) s(T_X) = 1$ completes the proof.
\end{proof}

By the projection formula, we deduce
\begin{equation}
  \int_Y \zeta^9 (\pi^* h)^2 = \int_X s_4(T_X) h^2, \qquad \int_Y \zeta^{11} = \int_X s_6(T_X) = K.
\end{equation}

\begin{definition}\label{def:fujikiconstant}
Let $a \in H^{4j}(X,\Q)$ be a deformation-invariant characteristic class, with $0 \le j \le 3$. The generalized Fujiki relation~\cite[Theorem~2.1]{BS} guarantees the existence of a constant $C_X(a)$ relative to $q_X$, called the \emph{generalized Fujiki constant}, satisfying
\begin{equation}
  \int_X a h^{6-2j} = C_X(a) q_X(h)^{3-j}, \qquad \text{for all } h \in H^2(X,\R).
\end{equation}
\end{definition}
In particular, $C_X(1) = R$ and $C_X(s_6) = K$.

\begin{proposition}\label{prop:signedsegre}
Let $X$ be a hyperk\"ahler manifold of dimension $m=2n$ equipped with a hyperk\"ahler metric $g$ and associated K\"ahler form $\omega$. Let $\mathcal{S}_j(T_X,g)$ denote the Segre forms constructed from the Chern connection of $g$. Then, for all $0 \le k \le n$,
\begin{equation}
  (-1)^k \mathcal{S}_{2k}(T_X,g) \wedge \omega^{m-2k} \ge 0.
\end{equation}
Consequently, for every compact IHS sixfold,
\begin{equation}
  (-1)^k C_X(s_{2k}) \ge 0, \qquad \text{for all } 0 \le k \le 3.
\end{equation}
\end{proposition}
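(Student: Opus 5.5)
We will prove the pointwise inequality using the curvature structure of a hyperk\"ahler metric. Then we integrate, and pass to Fujiki constants via Definition~\ref{def:fujikiconstant}.

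\emph{Step 1: curvature of $\mathcal{O}_Y(1)$.} We realize the Segre forms as fibre integrals. Let $\pi\colon Y=\mathbb{P}(T_X)\to X$, and equip $\mathcal{O}_Y(1)$ with the metric induced by $g$. Let $\Theta$ be its Chern curvature form. The form-level version of Lemma~\ref{lem:projectivebundle} (Guler, Diverio, Mourougane) gives $\mathcal{S}_j(T_X,g)=\pi_*\bigl((\tfrac{i}{2\pi}\Theta)^{5+j}\bigr)$ in dimension six, and $\pi_*(\Theta^{m-1+j})$ in general. At a point $[v]\in Y$, the form $\tfrac{i}{2\pi}\Theta$ splits as a vertical part $\omega_{FS}$, which is positive on the fibre, plus a horizontal part
\[
-\frac{R(v,\bar v,\cdot,\bar\cdot)}{|v|^2}.
\]
(The sign depends on the convention for $\mathcal{O}(1)$ on $\mathbb{P}(T_X)$ versus $\mathbb{P}(\Omega_X)$.)

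\emph{Step 2: expand the fibre integral.} Write $\tfrac{i}{2\pi}\Theta=\omega_{FS}+\beta_v$ with $\beta_v$ horizontal. Then
\[
\pi_*\bigl((\omega_{FS}+\beta_v)^{m-1+2k}\bigr)\wedge\omega^{m-2k}
=\binom{m-1+2k}{2k}\int_{\mathbb{P}(T_xX)}\beta_v^{2k}\wedge\omega^{m-2k}\,\omega_{FS}^{m-1}.
\]
The task therefore reduces to a pointwise sign statement:
\[
(-1)^k\beta_v^{2k}\wedge\omega^{m-2k}\ge 0 \quad\text{for each } v.
\]
Since $\beta_v$ is, up to sign, the real $(1,1)$-form $\theta_v=R(v,\bar v,\cdot,\bar\cdot)$, it suffices to know the signature of $\theta_v$ as a Hermitian form on $T_xX$.

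\emph{Step 3: the hyperk\"ahler input, which is the main obstacle.} The Riemann tensor of a hyperk\"ahler metric is $\mathfrak{sp}(n)$-valued. In the holomorphic setting this means $R_{a\bar b c\bar d}\sigma$-compatibly defines a symmetric $4$-tensor $\Phi\in\Sym^4 T^*$ via the symplectic form $\sigma$. For fixed $v$, the Hermitian form $\theta_v(w,\bar w)=R(v,\bar v,w,\bar w)$ satisfies a symmetry:
\[
\theta_v(w,\bar w)=\theta_v\bigl(J\bar w',\dots\bigr),
\]
where $w\mapsto j(w)=\sigma^{-1}\bar w$ is the quaternionic structure. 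This makes $\theta_v$ trace-free on each quaternionic line in a suitable sense. More precisely, its eigenvalues on $T_x$ come in pairs $\{\lambda,-\lambda\}$ relative to the decomposition $T_x=L\oplus L'$ with $L'=j(L)$.

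I would attempt to prove that $\theta_v$ has signature pattern $(\lambda_1,\dots,\lambda_n,-\lambda_1,\dots,-\lambda_n)$ in an $\omega$-unitary basis. Then the elementary symmetric function gives
\[
\frac{\theta_v^{2k}\wedge\omega^{m-2k}}{\omega^m}\propto e_{2k}(\lambda,-\lambda)=(-1)^k e_k(\lambda_1^2,\dots,\lambda_n^2),
\]
and the right-hand side has sign $(-1)^k$, because $e_{2k}$ evaluated at $(\lambda,-\lambda)$ equals the $t^{2k}$ coefficient of $\prod(1-\lambda_i^2t^2)$. This identity is exactly the mechanism needed. Verifying the $\pm$ pairing of eigenvalues (the anti-linear map $j$ should intertwine $\theta_v$ with $-\theta_v$, using $R\in\mathfrak{sp}$ and Ricci-flatness) is the delicate point, and I would check it first. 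The fallback is to prove only the weaker trace identity together with a Hodge-index-type inequality.

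\emph{Step 4: conclusion.} Integrating over $X$ with $\omega$ the K\"ahler form of a class $h$ gives $(-1)^k\int_X s_{2k}h^{6-2k}\ge0$. Definition~\ref{def:fujikiconstant} gives $\int_X s_{2k}h^{6-2k}=C_X(s_{2k})q_X(h)^{3-k}$, and $q_X(h)>0$ on the K\"ahler cone. Hence $(-1)^kC_X(s_{2k})\ge0$.
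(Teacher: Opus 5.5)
Your route is the paper's: Diverio's fibre-integral representation of the Segre forms, the vertical/horizontal splitting of the curvature of $\mathcal{O}(1)$, the $\pm$-pairing of the horizontal eigenvalues, the identity $e_{2k}(\lambda,-\lambda)=(-1)^k e_k(\lambda_1^2,\dots,\lambda_n^2)$, and then a hyperk\"ahler representative of a K\"ahler class plus the Fujiki relation. The gap is Step~3, the only place where the hyperk\"ahler hypothesis enters. You leave it as something you ``would attempt to prove'', and the displayed symmetry $\theta_v(w,\bar w)=\theta_v(J\bar w',\dots)$ is not actually a statement. Your fallback would not rescue it. Ricci-flatness only gives $\operatorname{tr}_\omega\theta_v=\operatorname{Ric}(v,\bar v)=0$. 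That settles $k=1$, since $e_2(\mu)=-\tfrac12\sum\mu_i^2$ for trace-free $\mu$, but not $k\ge 2$: the trace-free spectrum $(3,-1,-1,-1,0,0)$ has $e_4=-3<0$. And $k=2$ is exactly $C_X(s_4)\ge 0$, i.e.\ $R\ge 1440$, on which everything later depends.

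The pairing itself takes one line, and Ricci-flatness plays no separate role.
\begin{itemize}
\item Since $g$ and $\sigma$ are parallel, the curvature endomorphism $A=R(v,\bar v)$ lies in the holonomy algebra $\mathfrak{sp}(n)=\mathfrak{u}(T_xX)\cap\mathfrak{sp}(T_xX,\sigma)$. So $A$ is skew-Hermitian and commutes with the antiunitary quaternionic structure $j$.
\item The Hermitian endomorphism $H=iA$, which represents $\theta_v$ relative to $g$ up to a real factor, therefore satisfies $jH=-Hj$, because $j$ is antilinear.
\item Hence $Hw=\lambda w$ implies $H(jw)=-\lambda\, jw$. Also $\ker H$ is $j$-stable and so even-dimensional. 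This gives the spectrum $\pm\lambda_1,\dots,\pm\lambda_n$.
\end{itemize}
The paper phrases the same fact as $S_v^tJ+JS_v=0$ for the Hermitian matrix $S_v$ in a unitary frame adapted to $\sigma$, whence $S_v\sim -S_v^t$.

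One further point, which the paper makes explicit and you use silently: the horizontal curvature of $\mathcal{O}(1)$ at $[v]$ arises as $\xi\mapsto R(\xi,\bar\xi,v,\bar v)$, with $v$ in the endomorphism slots. It is the K\"ahler symmetry $R_{i\bar jk\bar\ell}=R_{k\bar\ell i\bar j}$ that lets you rewrite this as $\langle R(v,\bar v)\xi,\xi\rangle$, which is the form the holonomy constraint applies to.
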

\begin{proof}
Fix a point $x \in X$, and choose a $g$-unitary frame $e_1, \ldots, e_m$ of $T_x^{1,0}X$ such that the parallel holomorphic symplectic form is represented by the standard symplectic matrix $J$ with $J^2 = -I$. Let $\theta^1, \ldots, \theta^m$ be the dual coframe, with $\omega_x = i\sum_i \theta^i \wedge \bar{\theta}^i$, and write the Chern curvature as $\Theta(T_X,g) = \sum_{i,j} M_{i\bar{j}}\theta^i \wedge \bar{\theta}^j$. For a unit vector $v \in T_x^{1,0}X$, consider the curvature endomorphism
\begin{equation}
  S_v = \sum_{i,j=1}^m v_i \bar{v}_j M_{i\bar{j}}, \qquad \text{where } M_{i\bar{j}}e_k = \sum_{\ell=1}^m R_{i\bar{j}k\bar{\ell}}e_\ell.
\end{equation}
The matrix $S_v$ is Hermitian and satisfies $S_v^t J + J S_v = 0$. Hence, $S_v$ is similar to $-S_v^t$, and its eigenvalues occur in opposite pairs.

By Diverio~\cite[Proposition~1.1 and Section~2]{Diverio}, the Segre forms are represented by fiber integrals of powers of the curvature form $\Xi = \omega_{\mathrm{FS}} - \eta_v$ of the dual tautological line bundle on $\mathbb{P}(T_X)$. The horizontal form at $[v]$ is
\begin{equation}
  \eta_v = \frac{i}{2\pi}\sum_{i,j=1}^m H_{i\bar{j}}(v)\theta^i \wedge \bar{\theta}^j,
  \qquad H_{i\bar{j}}(v) = \sum_{k,\ell=1}^m R_{i\bar{j}k\bar{\ell}}v_k\bar{v}_\ell.
\end{equation}
The K\"ahler curvature symmetry $R_{i\bar{j}k\bar{\ell}} = R_{k\bar{\ell}i\bar{j}}$ gives $H_{i\bar{j}}(v) = (S_v)_{ji}$. Thus, $H(v) = S_v^t$, and the eigenvalues of $\eta_v$ relative to $\omega$ occur in pairs $\pm\lambda_1(v), \ldots, \pm\lambda_n(v)$, with the factor $1/(2\pi)$ included in this definition.

The elementary symmetric polynomial of order $2k$ in these paired eigenvalues equals $(-1)^k e_k(\lambda_1(v)^2, \ldots, \lambda_n(v)^2)$. Taking the wedge product with $\omega^{m-2k}$ and evaluating over the fiber $\mathbb{P}(T_x X)$, as in Diverio~\cite[Proposition~3.3]{Diverio}, yields
\begin{equation}
  (-1)^k \mathcal{S}_{2k}(T_X,g) \wedge \omega^{m-2k} = \frac{\binom{m-1+2k}{2k}}{\binom{m}{2k}} \left( \int_{\mathbb{P}(T_x X)} e_k(\lambda_1(v)^2, \ldots, \lambda_n(v)^2) \, \omega_{\mathrm{FS}}^{m-1} \right) \omega^m.
\end{equation}
Since the integrand involves elementary symmetric polynomials of non-negative real numbers and the fiber measure is positive, with $\int_{\mathbb{P}(T_xX)}\omega_{\mathrm{FS}}^{m-1} = 1$, the differential form is non-negative. Every K\"ahler class on a compact IHS manifold has a Ricci-flat hyperk\"ahler representative. Integration over $X$ and the generalized Fujiki relation then give $(-1)^k C_X(s_{2k}) \ge 0$.
\end{proof}

\begin{lemma}\label{lem:fujiki}
In the normalization $q_X = \lambda_X/2$, we have
\begin{equation}
  C_X(1) = R, \quad C_X(c_2) = \frac{6R}{5}, \quad C_X(c_2^2) = \frac{4R + 23040}{5}, \quad C_X(c_4) = \frac{40320 - 8R}{5}.
\end{equation}
In particular, $C_X(s_4) = \frac{12}{5}(R - 1440)$.
\end{lemma}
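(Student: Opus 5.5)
The plan is to derive each constant from a Riemann--Roch identity in the parameter $q_X(h)$, combined with the Fujiki relation \eqref{eq:fujiki-normalized}. $C_X(1)=R$ holds by definition. $C_X(c_2)$ comes from Nieper's characteristic form: by definition $\lambda_X(h)=\frac{12}{5}\int h^6/\int c_2h^4$. With $\lambda_X=2q_X$ and $\int h^6=Rq_X^3$ this gives
\[
\int c_2h^4=\tfrac{12}{5}\cdot\tfrac{Rq_X^3}{2q_X}=\tfrac{6R}{5}q_X^2 .
\]

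For $C_X(c_2^2)$ and $C_X(c_4)$ I use Nieper's Riemann--Roch formula for line bundles on hyperk\"ahler manifolds. In the form of \cite{Nieper}, it says that for any $h$
\[
\int_X e^{h}\td_X=\Bigl(\int_X\sqrt{\td_X}\Bigr)\exp\!\bigl(\lambda_X(h)/2\bigr)\cdot(\text{correction}),
\]
and the precise statement is
\[
\int_X \td_X^{1/2}e^{h}=\Bigl(\int_X\sqrt{\td_X}\Bigr)\Bigl(1+\lambda_X(h)\Bigr)^{n}
\]
for $n=3$, with Nieper's normalization. Expanding both sides in powers of $h$ and matching degrees gives $\int h^6$, $\int [\sqrt{\td}]_4h^4$ and $\int[\sqrt{\td}]_8h^2$ in terms of $R$. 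Degree $0$ is tautological. Degree $6$ must reproduce \eqref{eq:fujiki-normalized}, and I will use it to calibrate the normalization: with $q_X=\lambda_X/2$ the right side is $\frac{R}{5760}(1+2q_X)^3$ up to a convention, and I will match it so that $\frac{1}{720}\int h^6=\frac{R}{5760}\cdot 8q_X^3$, i.e.\ $\int h^6=Rq_X^3$.

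Next I compute the components of $\sqrt{\td_X}$ for $c_1=c_3=c_5=0$. From $\td=1+\frac{c_2}{12}+\frac{3c_2^2-c_4}{720}+\dots$ one gets
\[
[\sqrt{\td}]_4=\tfrac{c_2}{24},\qquad [\sqrt{\td}]_8=\tfrac{7c_2^2-4c_4}{5760}.
\]
The degree-$4$ identity $\frac{1}{24}\cdot\frac{1}{24}\int c_2h^4=\frac{R}{5760}\cdot 12q_X^2$ then reproduces $C_X(c_2)=6R/5$, which is a consistency check on the normalization. The degree-$2$ identity gives one linear relation,
\[
7C_X(c_2^2)-4C_X(c_4)=6R\cdot(\text{const}),
\]
and with the stated values this is $7\cdot\frac{4R+23040}{5}-4\cdot\frac{40320-8R}{5}=12R$.

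A second relation is needed, and I would take it from ordinary HRR for $\chi(X,\mathcal O_X)=4$: $\int\td_X=n+1=4$. This relates $c_2^3,c_2c_4,c_6$, which is degree 12 and not what we need. I therefore expect the main obstacle to be finding the second independent relation in degree 8. The natural candidate is Hirzebruch--Riemann--Roch for $\chi(\Omega^1_X)$ or $\chi(T_X)$ together with the generalized Fujiki relation for $\int c_2^2h^2$ versus $\int c_4h^2$. The better route is the $\mathcal O_X$-twisted HRR, $\chi(L)=\int e^{h}\td_X$. Its $h^2$ coefficient gives $\frac12\int[\td]_{8}h^2=\frac{1}{1440}\int(3c_2^2-c_4)h^2$, expressed through the known polynomial $\chi(L)=\binom{q_X(h)\cdot(\cdot)+4}{3}$-type formula of Nieper in terms of $R$. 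Together with the $\sqrt{\td}$ relation, this gives two linear equations for $C_X(c_2^2)$ and $C_X(c_4)$. Solving them should yield the claimed values, and the constants $23040$ and $40320$ would arise from the $\chi(\mathcal O_X)=4$ input.

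Finally, $C_X(s_4)=C_X(c_2^2)-C_X(c_4)=\frac{12R-17280}{5}=\frac{12}{5}(R-1440)$.
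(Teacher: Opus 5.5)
Your derivations of $C_X(c_2)=6R/5$ (directly from the definition of $\lambda_X$) and of $7C_X(c_2^2)-4C_X(c_4)=12R$ (the $h^2$-coefficient of Nieper's formula $\int_X e^h\sqrt{\td_X}=(\int_X\sqrt{\td_X})(1+\lambda_X(h))^3$, where your unspecified constant is $2$) are correct. The latter is exactly the Beckmann--Song relation the paper quotes.

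\textbf{The gap is the second linear relation.} Hirzebruch--Riemann--Roch alone gives nothing new here. Write $t=q_X(h)$. The $t$-coefficient of $\mathrm{RR}_X(t)=\int_X e^h\td_X$ is by definition $\frac{1}{1440}\bigl(3C_X(c_2^2)-C_X(c_4)\bigr)$, which is precisely the unknown. Also, $\chi(\mathcal{O}_X)=4$ fixes only the constant term. The ``binomial-type formula of Nieper'' you invoke does not exist for an arbitrary IHS sixfold. Binomial expressions such as $\binom{q_{\mathrm{BBF}}/2+4}{3}$ for $\mathrm{K3}^{[3]}$, $4\binom{q_{\mathrm{BBF}}/2+3}{3}$ for $\mathrm{Kum}_3$, or $4\binom{q_{\mathrm{BBF}}/4+3}{3}$ for $\mathrm{OG6}$ are known only type by type. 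They differ from one another, so none of them can be assumed for an unknown $X$. As written, your second equation is unsupported, and the two unknowns $C_X(c_2^2)$, $C_X(c_4)$ are not determined.

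The missing input is the functional equation of the Riemann--Roch polynomial. In the normalization $q_X=\lambda_X/2$ it reads $\mathrm{RR}_X(-t-2)=-\mathrm{RR}_X(t)$ (Debarre's Proposition~2.1, as used in the paper), which forces
\[
  \mathrm{RR}_X(t)=A(t+1)^3+B(t+1).
\]
The top-degree term gives $A=R/720$, and $\chi(\mathcal{O}_X)=A+B=4$. Hence the $t$-coefficient is $3A+B=\frac{R}{360}+4$, that is, $3C_X(c_2^2)-C_X(c_4)=4R+5760$. This symmetry is exactly how $\chi(\mathcal{O}_X)=4$ reaches degree $8$ and produces the constants $23040$ and $40320$. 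Solving this together with your $\sqrt{\td}$ relation gives the stated values of $C_X(c_2^2)$ and $C_X(c_4)$, and hence $C_X(s_4)$. With this ingredient added, your argument coincides with the paper's, apart from your more direct derivation of $C_X(c_2)$.
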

\begin{proof}
Let $\mathrm{RR}_X(t)$ denote the polynomial satisfying $\int_X e^h \td_X = \mathrm{RR}_X(q_X(h))$. For a line bundle $\mathcal{L}$, Hirzebruch--Riemann--Roch gives
\[
  \chi(X,\mathcal{L}) = \mathrm{RR}_X(q_X(c_1(\mathcal{L}))).
\]
In the notation of Debarre~\cite[Proposition~2.1]{Debarre}, we have $\mathrm{RR}_X(t) = Q_{RR,X}(2t)$, since $q_X = \lambda_X/2$. The symmetry in that proposition therefore gives $\mathrm{RR}_X(-t-2) = -\mathrm{RR}_X(t)$ in dimension six. Thus, $\mathrm{RR}_X(t) = A(t+1)^3 + B(t+1)$. Comparing the leading coefficient $A = R/720$ with $\chi(\mathcal{O}_X) = 4$ yields
\begin{equation}\label{eq:RRsix}
  \mathrm{RR}_X(t) = \frac{R}{720}t^3 + \frac{R}{240}t^2 + \left(\frac{R}{360} + 4\right)t + 4.
\end{equation}
Equating coefficients with the Chern character expansion establishes $C_X(c_2) = 6R/5$ and $3C_X(c_2^2) - C_X(c_4) = 4R + 5760$. By Beckmann--Song~\cite[Corollary~2.6]{BS}, we also have $7C_X(c_2^2) - 4C_X(c_4) = 12R$. Solving this linear system gives the explicit formulas for $C_X(c_2^2)$ and $C_X(c_4)$, from which $C_X(s_4) = C_X(c_2^2 - c_4)$ follows.
\end{proof}

Applying Proposition~\ref{prop:signedsegre} to Lemma~\ref{lem:fujiki} gives the fundamental lower bound
\begin{equation}\label{eq:segre4lower}
  R \ge 1440,
\end{equation}
together with the top Segre constraint
\begin{equation}\label{eq:segre6budget}
  K \le 0.
\end{equation}
Combining Beckmann--Song's inequality~\cite[Propositions~2.3--2.4]{BS} with Lemma~\ref{lem:fujiki}, we obtain
\begin{equation}\label{eq:sixfoldC2UpperCore}
  R \le \frac{2880(b_2 + 4)}{b_2 + 1},
\end{equation}
with equality holding if and only if $c_2(X) \in \SH^4(X,\Q)$.

\begin{lemma}\label{lem:chernchi}
Let $X$ be an IHS sixfold and set $\chi^p = \chi(X, \Omega_X^p)$. Then
\begin{equation}\label{eq:sixfoldChernChiCore}
  \begin{aligned}
    \int_X c_2^3   &= 29088 - 184\chi^1 - 8\chi^2, \\
    \int_X c_2 c_4 &= 5472 - 208\chi^1 - 8\chi^2, \\
    \int_X c_6     &= 144 - 16\chi^1 + 4\chi^2.
  \end{aligned}
\end{equation}
Consequently,
\begin{equation}\label{eq:Schi}
  R = 3948 + 19\chi^1 + \chi^2, \qquad K = 12(2424 + \chi^1 - R).
\end{equation}
\end{lemma}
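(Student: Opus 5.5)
We compute the three Chern numbers from Hirzebruch--Riemann--Roch applied to the bundles $\Omega_X^p$ for $p=0,1,2$. The only inputs are the vanishing of the odd Chern classes and the value $\chi(\mathcal{O}_X)=\chi^0=4$.

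\textbf{Step 1: the Todd integrand in Chern roots.} Because of the symplectic form, the Chern roots of $T_X$ can be taken as $\pm x_1,\pm x_2,\pm x_3$. Hence every characteristic class is a symmetric polynomial in $x_1^2,x_2^2,x_3^2$. Write $p_1=\sum x_i^2$, $p_2=\sum_{i<j}x_i^2x_j^2$ and $p_3=x_1^2x_2^2x_3^2$. Then
\[
c_2=-p_1,\qquad c_4=p_2,\qquad c_6=-p_3.
\]
We have $\operatorname{ch}(\Omega_X^p)=e_p\bigl(e^{\pm x_i}\bigr)$, and
\[
\td_X=\prod_i \frac{x_i^2}{(1-e^{-x_i})(1-e^{x_i})}=\prod_i \frac{x_i^2}{4\sinh^2(x_i/2)}.
\]
We expand the degree-12 part of $\operatorname{ch}(\Omega_X^p)\,\td_X$ for $p=0,1,2$ as a linear combination of $p_1^3$, $p_1p_2$ and $p_3$. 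This gives three linear equations expressing $\chi^0,\chi^1,\chi^2$ in terms of $\int c_2^3$, $\int c_2c_4$ and $\int c_6$.

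For $p=1$, use
\[
\operatorname{ch}(\Omega_X)=6+\sum_k \frac{2x_1^{2k}+2x_2^{2k}+2x_3^{2k}}{(2k)!}.
\]
For $p=2$, use $\operatorname{ch}(\Lambda^2)=\tfrac12\bigl(\operatorname{ch}^2-\psi^2\operatorname{ch}\bigr)$, where $\psi^2$ is the Adams operation, which doubles the roots. Both products only involve power sums, which can be converted back to $p_1,p_2,p_3$ by Newton's identities.

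\textbf{Step 2: solve the system.} The $p=0$ equation with $\chi^0=4$ is the classical identity
\[
\int_X \td_X=\frac{1}{60480}\int_X\bigl(2c_6-\dots\bigr),
\]
written in symplectic form. Inverting the resulting $3\times 3$ rational matrix yields the stated affine formulas in $\chi^1$ and $\chi^2$; the constants $29088$, $5472$ and $144$ come from the contribution $\chi^0=4$.

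As a consistency check, the formulas should reproduce the known $\mathrm{K3}^{[3]}$ values, namely $c_6=3200$ and $\chi^1=-108$, together with the corresponding $\chi^2$.

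\textbf{Step 3: derive $R$ and $K$.} Substituting into \eqref{eq:Rchern} gives
\[
R=\frac{1}{168}\bigl(31\cdot 29088-44\cdot 5472+16\cdot 144\bigr)+\dots=3948+19\chi^1+\chi^2 .
\]
Using $s_6=-c_2^3+2c_2c_4-c_6$ gives
\[
K=-29088+2\cdot 5472-144+(184-416+16)\chi^1+(8-16-4)\chi^2=-18288-216\chi^1-12\chi^2 .
\]
Rewriting this with $\chi^2=R-3948-19\chi^1$ gives
\[
K=-18288+47376-12R+(228-216)\chi^1=12\bigl(2424+\chi^1-R\bigr),
\]
as claimed.

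\textbf{Main obstacle.} The main work is the bookkeeping in Step 1: extracting the degree-12 coefficients of $\operatorname{ch}(\Omega^2_X)\,\td_X$ correctly. Sign errors from $c_{2k}=(-1)^k e_k(x^2)$ are a particular risk. To control this, I would first compute the expansion
\[
\frac{x^2}{4\sinh^2(x/2)}=1-\frac{x^2}{12}+\frac{x^4}{240}-\frac{x^6}{6048}+\cdots,
\]
and then verify the final formulas against $\mathrm{K3}^{[3]}$ and $\mathrm{Kum}_3$ data as a check.
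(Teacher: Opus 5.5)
Your proposal follows the paper's proof. You apply HRR to $\mathcal{O}_X,\Omega_X^1,\Omega_X^2$ with the odd Chern classes vanishing and $\chi(\mathcal{O}_X)=4$, which gives a $3\times 3$ linear system in $(a,d,e)=\int_X(c_2^3,c_2c_4,c_6)$. The paper records this system as $241920=10a-9d+2e$, $60480\chi^1=60a-306d-492e$, $60480\chi^2=150a-1143d+13134e$, then solves it and substitutes into the formulas for $R$ and $K$; your Step 3 arithmetic is correct.

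Two small slips should be fixed:
\begin{enumerate}
\item Your middle expression for $\td_X$ is off by a sign. Each pair of roots $\pm x_i$ contributes $-x_i^2/\bigl((1-e^{-x_i})(1-e^{x_i})\bigr)$, although your $\sinh$ form and its expansion are correct.
\item Your $\mathrm{K3}^{[3]}$ sanity check uses the wrong value $\chi^1=-108$, which would make the check fail spuriously. The correct values are $\chi^1=-64$ and $\chi^2=508$, coming from $h^{1,1}=h^{5,1}=21$, $h^{3,1}=22$, $h^{2,2}=253$. These go with $\int c_2^3=36800$, $\int c_2c_4=14720$, and $\int c_6=3200$.
\end{enumerate}
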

\begin{proof}
Applying Hirzebruch--Riemann--Roch to the sheaves $\mathcal{O}_X, \Omega_X^1$, and $\Omega_X^2$, and utilizing the vanishing of odd Chern classes together with $\chi(\mathcal{O}_X) = 4$, we obtain the linear equations
\begin{equation}
  \begin{aligned}
    241920        &= 10a - 9d + 2e, \\
    60480 \chi^1  &= 60a - 306d - 492e, \\
    60480 \chi^2  &= 150a - 1143d + 13134e,
  \end{aligned}
\end{equation}
where $(a,d,e) = \int_X (c_2^3, c_2 c_4, c_6)$. Solving this system yields~\eqref{eq:sixfoldChernChiCore}. Substituting these values into~\eqref{eq:Rchern} and noting that $K = -a + 2d - e$ produces~\eqref{eq:Schi}.
\end{proof}

\section{Representation Theory of the LLV Algebra}\label{sec:llv}

We describe the possible LLV summands and translate their graded characters into Betti and Hodge number formulas. We also establish parity constraints on the odd multiplicities and express $R$ and $K$ through the LLV decomposition.

Let $X$ be an IHS sixfold, let $\mathfrak{g} \simeq \so(b_2(X)+2, \C)$ denote the LLV Lie algebra acting on $H^*(X,\C)$, and let $V = \C^{b_2(X)+2}$ be its standard representation. Setting $r = \lfloor(b_2+2)/2\rfloor$, the Lie algebra $\mathfrak{g}$ has rank $r$ and is of Cartan type $B_r$ for odd $b_2$ and type $D_r$ for even $b_2$.

For any dominant weight $\mu = (\mu_0, \ldots, \mu_{r-1})$ in standard orthonormal coordinates, let $V_\mu$ denote the corresponding irreducible $\mathfrak{g}$-module. Following~\cite[Section~2.2.1]{GKLR}, any weight $\nu$ occurring in the Hodge component $H^{p,q}(X)$ satisfies
\begin{equation}
  \nu_0 = \frac{p+q-6}{2}, \qquad \nu_1 = \frac{p-q}{2}.
\end{equation}
The subring $\SH \subset H^*(X,\C)$ generated by $H^2(X,\C)$ forms the irreducible Verbitsky component $V_{(3,0,\ldots,0)}$~\cite[Theorem~2.15]{GKLR}.

\begin{lemma}\label{lem:evenLLVsummands}
Let $X$ be an IHS sixfold with $b_2(X) \ge 19$. Every non-Verbitsky irreducible LLV summand $V_\mu$ satisfies the unconditional bound
\[
  \mu_0 + \mu_1 \le 2.
\]
Its highest-weight coordinates are integral in even cohomology and belong to $\Z + \frac12$ in odd cohomology. In particular, every non-Verbitsky irreducible summand in even cohomology is isomorphic to the trivial module $\mathbf{1}$ or to one of the following exterior-power representations:
\[
  \begin{cases}
    E_k = \Lambda^k V,\quad 1 \le k \le r, & \text{in type } B_r,\\
    E_k = \Lambda^k V,\quad 1 \le k < r,\quad E_r^+,\ E_r^-, & \text{in type } D_r,
  \end{cases}
\]
where $\Lambda^r V = E_r^+ \oplus E_r^-$ in type $D_r$.
\end{lemma}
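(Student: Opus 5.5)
The plan is to read off the constraints directly from the weight description of the LLV action, citing the unconditional highest-weight results of Green--Kim--Laza--Robles. The first step is the bound $\mu_0+\mu_1\le 2$. For any summand $V_\mu\subset H^*(X,\C)$, the highest weight $\mu$ is itself a weight occurring in some $H^{p,q}(X)$. Hence $\mu_0=(p+q-6)/2$ and $\mu_1=(p-q)/2$, which gives $\mu_0+\mu_1=p-3$.

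Since $0\le p\le 6$, we get $\mu_0+\mu_1\le 3$. Equality would force $p=6$. The unconditional result \cite[Proposition~2.34]{GKLR} (constraints on $\mu_0,\mu_1$) then shows that equality can only occur for the Verbitsky component $V_{(3,0,\dots,0)}$. Concretely, $H^{6,q}(X)\cong \overline{H^{q,6}}$ and $H^{p,0}$ lie in the subring generated by $\sigma$ and $\bar\sigma$, so they are contained in the Verbitsky component. I will cite that proposition for this step rather than reprove it.

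The second step is integrality. In even cohomology $p+q$ is even, so $\nu_0=(p+q-6)/2$ and $\nu_1=(p-q)/2$ are integers. All weights of an irreducible $\mathfrak g$-module differ from $\mu$ by elements of the root lattice, which lies in $\Z^r$. So either all coordinates of $\mu$ are integral, or all lie in $\Z+\tfrac12$ (the spin case). Because $\mu_0\in\Z$ in even degree, all coordinates are integral. In odd degree $\mu_0\in\Z+\tfrac12$, so all coordinates lie in $\Z+\tfrac12$.

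For the classification in even cohomology, $\mu$ is dominant and integral, so $\mu_0\ge\mu_1\ge\dots\ge\mu_{r-2}\ge|\mu_{r-1}|$ in type $D$, and all $\mu_i\ge0$ in type $B$. Combined with $\mu_0+\mu_1\le2$, this leaves $\mu=0$, $\mu=(1,\dots,1,0,\dots,0)$, or $\mu=(2,0,\dots,0)$. The first gives $\mathbf 1$. The second gives $\Lambda^kV$, and in type $D_r$ with $k=r$ it gives $E_r^\pm$ according to the sign of $\mu_{r-1}$.

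The case $\mu=(2,0,\dots,0)$ must be excluded, and this is the main obstacle; it is where the hypothesis $b_2\ge19$ is used. I would argue by dimension. A summand $V_{(2,0,\dots)}$ has degree-$4$ component isomorphic (as a $\mathfrak{so}(b_2)$-module) to $\Sym^2_0 H^2$ plus a trivial summand, of dimension about $b_2^2/2$. Its graded pieces are symmetric around the middle, with the highest weight in degree $2$ (since $\mu_0=2-3+\cdots$; $\mu_0=(k-6)/2=2$ gives $k=10$, which is actually degree $10$, i.e.\ degree $2$ after Poincaré symmetry). So it contributes a copy of $H^2$-type classes in $H^2$ or $H^{10}$, i.e.\ it would add at least $b_2$ extra dimensions to $b_2$. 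That is impossible, since $H^2$ is exhausted by the Verbitsky component, which already contains all of $H^2$.

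More simply, weights with $\mu_0=2$ occur in degree $10$, where $H^{10}\cong H^2$ lies entirely in $V_{(3,0,\dots)}$. Hence any summand with $\mu_0=2$ contradicts $b_{10}=b_2$. This in fact also kills $\Lambda^2V$-type highest weights $(1,1,0,\dots)$ at $\mu_0=1$, but not $\mu_0=2$ with $\mu_1=0$. In type $D_r$ at $\mu_0=2$ the same argument applies. I expect this step, checking exactly which degrees each candidate occupies, to need the careful bookkeeping; the condition $b_2\ge19$ should enter only through $r$ being large enough that no low-rank coincidences (e.g.\ $\Lambda^kV$ with $k$ near $r$ coinciding with $\mu_0=2$ patterns) arise.
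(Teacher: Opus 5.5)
Your proposal is correct and follows the paper's route. The bound $\mu_0+\mu_1\le2$ comes from GKLR; your remark that $H^{6,q}=\SH^{6,q}$ in fact proves it directly, since $\mu_0+\mu_1=p-3$. Integrality comes from the parity of $p+q$, and the enumeration of dominant integral weights is the same. $\Sym^2_0 V$ is excluded because a one-dimensional extreme graded piece would land in a degree already exhausted by $\SH$: your ``more simply'' sentence uses the top piece in $H^{10}$, the paper uses the bottom piece in $H^2$.

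Keep only that sentence of your last step, because the surrounding bookkeeping is wrong:
the degree-$4$ piece of $\Sym^2_0 V$ is $b_2$-dimensional, and the $\Sym^2$-sized piece sits in degree $6$;
the module adds exactly one class to each of $H^2$ and $H^{10}$, not $b_2$;
the argument does \emph{not} kill $\Lambda^2V=E_2$, whose pieces sit in degrees $4,6,8$ and which is on the allowed list;
and $b_2\ge19$ plays no role in this exclusion.
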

\begin{proof}
The highest-weight bound and the integrality assertions follow from Green--Kim--Laza--Robles~\cite[Propositions~2.34--2.35]{GKLR}. For integral highest weights, dominance together with $\mu_0 + \mu_1 \le 2$ leaves only the trivial module, the exterior-power representations listed above, and
\[
  V_{(2,0,\ldots,0)} = \Sym^2_0 V,
\]
the trace-free symmetric square of $V$. The lowest graded piece of $\Sym^2_0 V$ is one-dimensional and has $\nu_0 = -2$. Since cohomological degree is $6 + 2\nu_0$, this piece would contribute to $H^2(X,\C)$. But $H^2(X,\C)$ is already exhausted by $\SH$, so $\Sym^2_0 V$ cannot occur as an additional summand. This proves the claimed list.
\end{proof}

We use \emph{spin representation} for an irreducible module with highest weight $(\frac{1}{2}, \ldots, \frac{1}{2})$, and \emph{vector-spin representation} for one with highest weight $(\frac{3}{2}, \frac{1}{2}, \ldots, \frac{1}{2})$. The vector-spin module is obtained by tensoring a spin module with the standard module $V$ and removing one spin summand. In type $B_r$, there is a single spin module $S$, and the spin summand in $V \otimes S$ is again $S$. In type $D_r$, there are two half-spin modules $S^+$ and $S^-$, distinguished by the sign of the last highest-weight coordinate. We call both of them spin modules. Here $V \otimes S^+$ contains $S^-$, while $V \otimes S^-$ contains $S^+$; the remaining summand in each case is a vector-spin module. A module's contribution to $H^j$ means the dimension of its degree-$j$ part. In type $D_r$, we add the multiplicities of the two half-spin modules, and likewise add those of the two vector-spin modules.

\begin{lemma}\label{prop:oddgrowth}
Let $X$ be an IHS sixfold with $b_2 \ge 19$. Every odd LLV summand is either a spin representation or a vector-spin representation. Set $d_{b_2} = 2^{\lfloor(b_2-1)/2\rfloor}$. Each spin representation contributes $(0, d_{b_2})$ to $(H^3, H^5)$, while each vector-spin representation contributes $(d_{b_2}, b_2 d_{b_2})$. If $u$ and $v$ denote the total multiplicities of vector-spin and spin summands, respectively, then
\begin{equation}\label{eq:sixfoldOddSemigroupCore}
  b_3 = d_{b_2} u, \qquad b_5 = d_{b_2}(b_2 u + v), \qquad u, v \in \Z_{\ge 0}; \qquad w := b_5 - b_2 b_3 \ge 0.
\end{equation}
Moreover, if $b_2 \in \{19, 20\}$, then $u$ and $v$ are both even integers.
\end{lemma}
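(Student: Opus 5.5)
The plan is to classify the odd summands using Lemma~\ref{lem:evenLLVsummands}, then compute their graded pieces by restricting to the grading subalgebra, and finally derive the parity statement from an invariant pairing. By Lemma~\ref{lem:evenLLVsummands}, a summand $V_\mu$ occurring in odd cohomology has all coordinates in $\Z+\tfrac12$ and satisfies $\mu_0+\mu_1\le 2$. The Verbitsky component $\SH$ lives in even degrees, so it does not enter here. Dominance forces $\mu_1\ge\cdots\ge|\mu_{r-1}|\ge \tfrac12$. Hence $\mu_1=\tfrac12$, all later coordinates are $\tfrac12$ (up to the sign of the last one in type $D_r$), and $\mu_0\in\{\tfrac12,\tfrac32\}$. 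These are exactly the spin and vector-spin modules.

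For the contributions, I decompose each module under the grading element $\nu_0$, where degree $=6+2\nu_0$. For the spin module, the weights have $\nu_0=\pm\tfrac12$ with equal multiplicity. In type $B_r$ ($b_2=2r-1$) each piece has dimension $2^{r-1}=2^{(b_2-1)/2}$. In type $D_r$ ($b_2=2r-2$) a half-spin module has dimension $2^{r-1}$, split into two pieces of size $2^{r-2}=2^{(b_2-2)/2}$. In both cases each piece has dimension $d_{b_2}$. The spin module therefore sits in degrees $5$ and $7$ and contributes $(0,d_{b_2})$ to $(H^3,H^5)$.

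For the vector-spin module, I use $V\otimes S\ominus S'$ with the correct spin summand $S'$. The graded dimensions of $V$ are $1,b_2,1$ at $\nu_0=-1,0,1$, and those of $S$ are $d_{b_2},d_{b_2}$ at $\nu_0=\mp\tfrac12$. Multiplying gives $d_{b_2}$ at $\nu_0=-\tfrac32$ and $(b_2+1)d_{b_2}$ at $\nu_0=-\tfrac12$. Subtracting the $d_{b_2}$ coming from $S'$ leaves $(d_{b_2},b_2d_{b_2})$ in degrees $(3,5)$. Summing over summands gives \eqref{eq:sixfoldOddSemigroupCore}, and $w=d_{b_2}v\ge0$.

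For parity, I use the LLV-invariant nondegenerate pairing on $H^*(X)$: Poincaré pairing twisted by signs $(-1)^{\deg/2}$-type factors, that is, the Mukai-type pairing invariant under $\mathfrak g$. I first determine its symmetry type (symmetric or skew) on $H^{\mathrm{odd}}$. For $b_2=20$ ($D_{11}$, $r$ odd) this is easy. The half-spin modules are not self-dual, since $S^+$ and $S^-$ are mutually dual, and the same holds for the two vector-spin modules. Nondegeneracy of the pairing then forces the multiplicity of $S^+$ to equal that of $S^-$. Hence $v$ is even, and likewise $u$ is even.

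For $b_2=19$ ($B_{10}$) all modules are self-dual, and I will use the Frobenius--Schur type. The spin module of $\so(2r+1)$ with $r=10\equiv2\pmod 4$ carries an invariant form of one definite symmetry type. The vector-spin module carries the same type, since $V$ is orthogonal and the relevant summand of $V\otimes S$ inherits the form of $S$. If this type is opposite to the type of the signed pairing on odd cohomology, each isotypic component $V_\mu\otimes \C^{m_\mu}$ forces the multiplicity space to carry a nondegenerate skew form. This makes $m_\mu$ even.

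The main obstacle is the sign bookkeeping in this last step: pinning down exactly which symmetry the LLV-invariant pairing has on odd cohomology, including the twisting signs and the fact that $H^3$ pairs with $H^9$. I would first settle this on the Verbitsky component, where everything is explicit, and then compare it with the standard table of spin-form types for $B_r$ and $D_r$ in terms of $r \bmod 4$.
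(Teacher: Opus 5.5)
Your classification of the odd summands and your graded-dimension count for the spin and vector-spin modules are the same as the paper's. Your treatment of $b_2=20$ also matches the paper: for $D_{11}$ the half-spin modules $S^\pm$ are mutually dual, as are the two vector-spin modules, so a nondegenerate invariant pairing forces equal multiplicities within each pair.

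The gap is the case $b_2=19$. There you reduce evenness to the claim that the invariant form on the spin module of $\so(21,\C)$ has the opposite symmetry type to the invariant pairing on $H^{\mathrm{odd}}$. You never determine either type, so the argument stays conditional. Your proposed fix, settling the sign on $\SH$, does not help: $\SH$ lies in even degrees, and the odd part has to be computed directly.

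Two short facts close the gap.

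First, the pairing is symmetric. Define $\Phi(\alpha,\beta)=(-1)^{(j-1)/2}\int_X\alpha\cup\beta$ for $\alpha\in H^j$ with $j$ odd, and take $\beta\in H^{12-j}$. Swapping the arguments produces a factor $-1$ from graded commutativity of odd classes. It produces another factor $-1$ because the exponents $(j-1)/2$ and $(11-j)/2$ differ by $6-j$, which is odd. So $\Phi$ is symmetric.

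The pairing is also $\mathfrak g$-invariant. For a Lefschetz operator, $\Phi(h\alpha,\beta)+\Phi(\alpha,h\beta)=\bigl((-1)^{(j+1)/2}+(-1)^{(j-1)/2}\bigr)\int_X h\alpha\beta=0$. The grading operator is skew-adjoint because the degrees $j-6$ and $6-j$ cancel. The dual Lefschetz operators are then skew-adjoint by uniqueness of $\mathfrak{sl}_2$-triples.

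Second, the spin module of $B_r$ is orthogonal if and only if $r\equiv 0,3 \pmod 4$, so for $r=10$ it is symplectic. Since $V$ is orthogonal, $V\otimes S$ carries a nondegenerate invariant alternating form. The summands $S$ and the vector-spin module $T$ are non-isomorphic and self-dual, so they are orthogonal in $V\otimes S$. Hence $T$ inherits a nondegenerate alternating form.

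Both isotypic components therefore consist of symplectic-type modules inside a symmetric invariant pairing. This forces nondegenerate alternating forms on the multiplicity spaces, so $u$ and $v$ are even. With these two facts inserted, your argument coincides with the paper's proof.
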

\begin{proof}
By Lemma~\ref{lem:evenLLVsummands}, the odd highest weights have half-integral coordinates. Dominance and $\mu_0 + \mu_1 \le 2$ force $\mu_0 \in \{\frac12,\frac32\}$ and $|\mu_i| = \frac12$ for $i \ge 1$. These are precisely the spin and vector-spin highest weights described above.

For a graded LLV module $M$, write $M_a$ for the part with first weight coordinate $\nu_0 = a$, which contributes to cohomological degree $6 + 2a$. The standard module $V$ has grades $-1,0,1$, with dimensions $1,b_2,1$, respectively. Let $S_*$ denote $S$ in type $B_r$ or either one of $S^\pm$ in type $D_r$. Its weights are $(\pm\frac12,\ldots,\pm\frac12)$, each with multiplicity one; all sign choices occur in type $B_r$, whereas in type $D_r$ the parity of the number of minus signs is fixed by the chirality. Fixing the first coordinate therefore gives
\[
  \dim (S_*)_{-1/2} = \dim (S_*)_{1/2}
  = \begin{cases}
      2^{r-1}, & \text{in type } B_r,\\
      2^{r-2}, & \text{in type } D_r,
    \end{cases}
  = d_{b_2}.
\]
There are no other grades, so each spin representation contributes $(0,d_{b_2})$ to $(H^3,H^5)$.

Write $T$ and $T^\pm$ for the vector-spin representations in types $B_r$ and $D_r$, respectively. The tensor-product decompositions are
\[
  V \otimes S \simeq T \oplus S \quad\text{in type } B_r,
  \qquad
  V \otimes S^\pm \simeq T^\pm \oplus S^\mp \quad\text{in type } D_r.
\]
First weight coordinates add in tensor products, so the two lowest grades are
\[
  \begin{aligned}
    (V \otimes S_*)_{-3/2}
      &= V_{-1} \otimes (S_*)_{-1/2},\\
    (V \otimes S_*)_{-1/2}
      &= \bigl(V_{-1} \otimes (S_*)_{1/2}\bigr)
         \oplus \bigl(V_0 \otimes (S_*)_{-1/2}\bigr),
  \end{aligned}
\]
of dimensions $d_{b_2}$ and $(b_2+1)d_{b_2}$. Removing the spin summand changes these dimensions to $d_{b_2}$ and $(b_2+1)d_{b_2}-d_{b_2} = b_2d_{b_2}$, respectively. Since the grades $-\frac32$ and $-\frac12$ correspond to $H^3$ and $H^5$, each vector-spin representation has the claimed contributions. Summing over the multiplicities $u$ and $v$ proves~\eqref{eq:sixfoldOddSemigroupCore}.

For the parity assertion, consider the signed Poincar\'e pairing on $H^{\mathrm{odd}}(X,\C)$:
\begin{equation}
  \Phi(\alpha,\beta) = (-1)^{(j-1)/2} \int_X \alpha \cup \beta, \qquad \alpha \in H^j(X,\C), \quad j \text{ odd}.
\end{equation}
This is a non-degenerate symmetric form by Poincar\'e duality and the sign convention. It is $\mathfrak{g}$-invariant because the Lefschetz $\mathfrak{sl}_2$ triples are skew-adjoint for $\Phi$~\cite[Section~1]{LL}.

Consequently, any irreducible summand admitting an invariant alternating form induces an alternating form on its multiplicity space, forcing its multiplicity to be even. By the invariant spinor pairings described in~\cite[Exercise~20.38(b)--(c)]{FultonHarris} and~\cite[Table~1.5.1]{DeligneSpinors}, the spin representation $S$ of $\so(21,\C)$ admits such a form. Since the standard representation $V$ has an invariant symmetric form, $V \otimes S$ has an invariant alternating form. Its two nonisomorphic self-dual summands, $S$ and the vector-spin representation, are orthogonal, so the restrictions of the form are non-degenerate. Thus, $u$ and $v$ are even when $b_2 = 19$. For $\so(22,\C)$, the two half-spin representations are dual to one another, as are the two vector-spin representations. The non-degenerate invariant pairing forces equal multiplicities within each dual pair; hence their total multiplicities $u$ and $v$ are even when $b_2 = 20$.
\end{proof}

\begin{remark}\label{rem:VoisinOddBounds}
Voisin~\cite[Theorems~2.3--2.4]{Voisin} proves exponential lower bounds for nonzero odd cohomology in arbitrary dimension. For an IHS sixfold, these give
\[
  b_3 > 0 \ \Longrightarrow\ b_3 \ge d_{b_2},
  \qquad
  b_3 = 0,\ b_5 > 0 \ \Longrightarrow\ b_5 \ge d_{b_2},
  \qquad d_{b_2} = 2^{\lfloor(b_2-1)/2\rfloor}.
\]
When $4 \mid b_2$, both lower bounds improve to $2^{b_2/2}$. Her argument uses cup-product maps and the universality of the Kuga--Satake construction.

In the six-dimensional range $b_2 \ge 19$, Lemma~\ref{prop:oddgrowth} recovers the basic exponential bounds directly from the LLV decomposition: $b_3 > 0$ forces $u \ge 1$, while $b_3 = 0$ and $b_5 > 0$ force $u = 0$ and $v \ge 1$. The exact formulas~\eqref{eq:sixfoldOddSemigroupCore} imply the divisibility $d_{b_2} \mid b_3,b_5$ and the joint constraint $b_5-b_2b_3 = d_{b_2}v \ge 0$. At $b_2 = 19,20$, the evenness of $u$ and $v$ raises both lower bounds to $2d_{b_2} = 1024$; for $b_2 = 20$, this agrees with Voisin's refinement. These multiplicity constraints will be combined below with the Segre inequalities.
\end{remark}

Let $\mathcal{I}$ index the irreducible non-Verbitsky exterior summands $E_i$, let $m_i \in \Z_{\ge 0}$ denote their multiplicities, and let $\tau \in \Z_{\ge 0}$ denote the multiplicity of the trivial representation. We write $b_j(E)$ and $h^{p,q}(E)$ for the contributions of a graded LLV module $E$ to the corresponding Betti and Hodge numbers. Set
\begin{equation}
  P_0(b_2) = b_2^2 - 60b_2 + 4068, \qquad f(b_2) = \frac{-b_2^3 + 15b_2^2 + 196b_2 + 420}{6}.
\end{equation}

\begin{lemma}\label{lem:sixfoldBudgetsCore}
Let $X$ be an IHS sixfold with $b_2 \ge 19$. Then
\begin{equation}\label{eq:Smodules}
  R = P_0(b_2) + \sum_{i \in \mathcal{I}} m_i \bigl(2b_4(E_i) - 23h^{3,1}(E_i)\bigr) + \frac{37-b_2}{2} b_3 - \frac{w}{2}.
\end{equation}
\begin{equation}\label{eq:Salamonmodules}
  \sum_{i \in \mathcal{I}} m_i \bigl(b_6(E_i) - 6b_4(E_i)\bigr) + \tau = f(b_2) - 16b_3.
\end{equation}
Furthermore, $b_4 = \binom{b_2+1}{2} + \sum_{i \in \mathcal{I}} m_i b_4(E_i)$ and
\begin{equation}\label{eq:chi1modules}
  \chi^1 = b_3 - 3b_2 + 6 - \sum_{i \in \mathcal{I}} m_i h^{3,1}(E_i).
\end{equation}
Finally, the Segre inequality $K \le 0$ is equivalent to
\begin{equation}\label{eq:oddSegrebudget}
  \frac{w}{2} \le b_2^2 - 57b_2 + 1638 + \frac{35-b_2}{2} b_3 + \sum_{i \in \mathcal{I}} m_i \bigl(2b_4(E_i) - 22h^{3,1}(E_i)\bigr).
\end{equation}
\end{lemma}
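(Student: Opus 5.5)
The plan is to express every quantity through the LLV decomposition
\[
  H^*(X,\C) = V_{(3,0,\ldots,0)} \oplus \tau\,\mathbf{1} \oplus \bigoplus_{i\in\mathcal{I}} m_i E_i \oplus (\text{odd part}),
\]
with Lemma~\ref{lem:evenLLVsummands} and Lemma~\ref{prop:oddgrowth} restricting which summands can occur, and then to substitute into the formulas of Lemma~\ref{lem:chernchi} and Salamon's relation~\eqref{eq:intro-salamon}.

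The first step is to compute the Hodge numbers of the Verbitsky component $\SH$ in degrees $\le 6$. These are $h^{p,q}$ of $\Sym^{\le 3}$ of a Hodge structure of K3 type, truncated by the single relation in each degree. One gets $b_4(\SH) = \binom{b_2+1}{2}$, and $b_6(\SH)$ equals $\binom{b_2+2}{3}$ minus the dimension of the relations in degree~$6$. The latter is the Verbitsky result that $\SH^{2k}\cong \Sym^k H^2$ for $k\le 3$ in dimension six. The needed Hodge numbers are $h^{3,1}(\SH)$, $h^{2,2}(\SH)$, $h^{4,0}=1$, and so on.

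Next, compute $\chi^1$ and $\chi^2$:
\[
  \chi^p = \sum_q (-1)^q h^{p,q}.
\]
For $\chi^1$, use $h^{1,0}=h^{1,2k+1}$-type symmetry: $h^{1,1}=b_2-2$ and $h^{1,3}=h^{3,1}$. The odd part contributes $h^{1,2}=h^{2,1}$ and $h^{1,4}$. Since an LLV summand with grade $-3/2$ in the vector-spin case has $\nu_1=\pm 1/2$, one checks $h^{2,1}=b_3/2$ and $h^{1,2}=b_3/2$. The term $h^{1,4}=h^{5,2}$ arises from $H^5$; the decisive point is that $H^5$ Hodge types $(p,q)$ with $p=1$ are controlled by $\nu_1 = -3/2$, which only the $V_{-1}\otimes S$ part of vector-spin modules reaches. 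The computation should give~\eqref{eq:chi1modules}, where $h^{3,1}$ of the Verbitsky part, namely $b_2-2$ (from $\sigma\cdot H^{1,1}$), accounts for the $-3b_2$. Doing the analogous, longer bookkeeping for $\chi^2$ uses $h^{2,q}$ in degrees up to~$8$, with Serre duality $h^{2,q}=h^{4,6-q}$, so that everything is expressed through $b_4(E_i)$ and $h^{3,1}(E_i)$, together with $b_3$ and $w$ for the odd part.

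Substituting into $R = 3948+19\chi^1+\chi^2$ then gives~\eqref{eq:Smodules}. The polynomial $P_0(b_2)$ collects the Verbitsky and constant contributions, and the odd part enters only through $b_3$ and $w=b_5-b_2b_3$ via Lemma~\ref{prop:oddgrowth}. In the same way, $K = 12(2424+\chi^1-R)$ combined with $K\le 0$ is rearranged into~\eqref{eq:oddSegrebudget} by dividing by $12$ and collecting terms.

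For~\eqref{eq:Salamonmodules}, substitute the decompositions of $b_2,\dots,b_6$ into~\eqref{eq:intro-salamon}. The Verbitsky part yields a cubic polynomial in $b_2$, which after simplification becomes $f(b_2)$. The trivial modules contribute only to $b_6$, each vector-spin module contributes to $b_3$, and all remaining $b_5$ terms must cancel because $b_5$ does not appear in~\eqref{eq:intro-salamon}. The identity $b_4 = \binom{b_2+1}{2} + \sum_i m_i b_4(E_i)$ is immediate, since spin modules live only in odd degrees.

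The main obstacle is the Hodge bookkeeping of each $E_k=\Lambda^k V$ and of the odd modules in bidegrees $(2,q)$. I will handle it using the weight description $\nu_0=(p+q-6)/2$, $\nu_1=(p-q)/2$: decompose each module under $\so(4)\times\so(b_2-2)$, where the first factor is spanned by the first two coordinates, and read off $h^{p,q}$ as the multiplicity of the weight $(\nu_0,\nu_1)$ times the dimension of the residual $\so(b_2-2)$-representation. For exterior powers, this expresses every $h^{p,q}(E_k)$ as a combination of binomial coefficients in $b_2-2$. It should then be a direct (if tedious) verification that $\chi^2(E_k)$ combines with $19\chi^1(E_k)$ into exactly $2b_4(E_k)-23h^{3,1}(E_k)$. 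I expect this uniform identity across all $k$ to be the delicate point, and I would check it first on $k=1,2$ before attempting the general case.
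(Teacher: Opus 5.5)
Your plan is correct and is essentially the paper's argument. You write $b_4$, $h^{3,1}$, $b_6$ through the LLV summands, using $b_4(\SH)=\binom{b_2+1}{2}$, $b_6(\SH)=\binom{b_2+2}{3}$ and $h^{3,1}(\SH)=b_2-2$. You then substitute into $R=3948+19\chi^1+\chi^2$, into $K=12(2424+\chi^1-R)$ from Lemma~\ref{lem:chernchi}, and into Salamon's relation.

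The step you flag as delicate does not need any branching to $\so(4)\times\so(b_2-2)$. The symmetries $h^{p,q}=h^{q,p}=h^{6-p,q}$, together with the known values of $h^{p,0}$, give
\[
\chi^1=b_3-2b_2+4-h^{3,1},\qquad \chi^2=2+2h^{2,2}-\tfrac{b_3+b_5}{2},\qquad h^{2,2}=b_4-2-2h^{3,1}.
\]
An exterior summand has $h^{1,1}(E)=h^{4,0}(E)=0$, so it contributes $-h^{3,1}(E)$ to $\chi^1$ and $2b_4(E)-4h^{3,1}(E)$ to $\chi^2$. Hence its contribution $2b_4(E)-23h^{3,1}(E)$ to $R$ is immediate, uniformly in $k$.

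There is one slip to fix. The equality $h^{1,4}=h^{4,1}=h^{2,1}=b_3/2$ comes from the weight of $V_0$ with $\nu_1=-1$ tensored with $(S_*)_{-1/2}$. It does not come from $V_{-1}\otimes S$, all of whose weights have $|\nu_1|=\tfrac12$. Read literally, your attribution would give $h^{1,4}=0$ and a wrong $\chi^1$.
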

\begin{proof}
Hodge symmetry and the Lefschetz decomposition on $X$ imply
\begin{equation}
  \chi^1 = b_3 - 2b_2 + 4 - h^{3,1}, \qquad \chi^2 = 2 + 2h^{2,2} - \frac{b_3 + b_5}{2}, \qquad h^{2,2} = b_4 - 2 - 2h^{3,1}.
\end{equation}
The exterior power $E_k$ contributes
\begin{equation}
  b_4(E_k) = \binom{b_2}{k-1}, \qquad h^{3,1}(E_k) = \binom{b_2-2}{k-2}, \qquad b_6(E_k) = \binom{b_2}{k} + \binom{b_2}{k-2}.
\end{equation}
For either middle exterior summand $E_r^\pm$ in type $D_r$, these contributions are one half of those of $E_r$. The outer automorphism exchanging the two summands fixes the first two Cartan coordinates, so their graded Hodge characters agree. Substituting these contributions together with $b_4(\SH) = \binom{b_2+1}{2}$, $b_6(\SH) = \binom{b_2+2}{3}$, and $h^{3,1}(\SH) = b_2 - 2$ into Lemma~\ref{lem:chernchi} gives~\eqref{eq:Smodules} and~\eqref{eq:chi1modules}. Salamon's relation~\eqref{eq:intro-salamon} yields~\eqref{eq:Salamonmodules}. Lastly, substituting~\eqref{eq:Smodules} and~\eqref{eq:chi1modules} into the inequality $R \ge 2424 + \chi^1$ (which is equivalent to $K \le 0$) yields~\eqref{eq:oddSegrebudget}.
\end{proof}

\section{Numerical Reduction to \texorpdfstring{$b_2 \le 24$}{b2 <= 24}}\label{sec:reduction}

We combine the Segre inequalities with the LLV multiplicity identities to prove $b_2(X) \le 24$ and the vanishing of $H^3(X,\Q)$ for $b_2(X) \ge 19$. We also bound the numbers $R$ and $K$ in the remaining case $b_2(X) = 24$, preparing for the geometric argument in the next section.

For the coefficient comparisons below, we write
\begin{equation}
  \operatorname{Sal}(E) = b_6(E) - 6b_4(E), \qquad \mathcal{R}(E) = 2b_4(E) - 23h^{3,1}(E)
\end{equation}
for the contributions of an exterior summand $E$ to Salamon's relation~\eqref{eq:Salamonmodules} and to $R$ in~\eqref{eq:Smodules}, respectively. Setting $x = k(b_2+2-k)$, we have
\begin{equation}\label{eq:exteriorratios}
  \frac{\operatorname{Sal}(E_k)}{b_4(E_k)} = \frac{(b_2+1)(b_2+2)}{x} - 8, \qquad \frac{\mathcal{R}(E_k)}{b_4(E_k)} = 2 + \frac{23(b_2+1-x)}{b_2(b_2-1)}.
\end{equation}
For any $\gamma > 0$, the AM--GM inequality yields
\begin{equation}\label{eq:AMGMcomparison}
  \begin{aligned}
    \frac{\gamma\operatorname{Sal}(E_k) - \mathcal{R}(E_k)}{b_4(E_k)} &= \frac{\gamma(b_2+1)(b_2+2)}{x} + \frac{23x}{b_2(b_2-1)} - 8\gamma - 2 - \frac{23(b_2+1)}{b_2(b_2-1)} \\
      &\ge 2\sqrt{\frac{23\gamma(b_2+1)(b_2+2)}{b_2(b_2-1)}} - 8\gamma - 2 - \frac{23(b_2+1)}{b_2(b_2-1)}.
  \end{aligned}
\end{equation}

\begin{proposition}\label{prop:26}
Every smooth compact irreducible hyperk\"ahler sixfold satisfies $b_2(X) \le 26$.
\end{proposition}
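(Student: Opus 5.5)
We argue by contradiction: assume $b_2 \ge 27$ and play the two identities of Lemma~\ref{lem:sixfoldBudgetsCore} against the Segre bounds $R \ge 1440$ from \eqref{eq:segre4lower} and $R \le 2880(b_2+4)/(b_2+1)$ from \eqref{eq:sixfoldC2UpperCore}. Since $b_2 \ge 19$, Lemma~\ref{lem:evenLLVsummands} and Lemma~\ref{prop:oddgrowth} apply, so the even non-Verbitsky part consists of $\tau$ trivial summands and exterior powers $E_k$ with multiplicities $m_k$. The odd part is recorded by $b_3 = d_{b_2}u$ and $w = d_{b_2}v \ge 0$.

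The key step is to take a positive combination $\gamma\cdot$\eqref{eq:Salamonmodules} $-$ \eqref{eq:Smodules}. This gives
\[
\sum_k m_k\bigl(\gamma\operatorname{Sal}(E_k) - \mathcal{R}(E_k)\bigr) + \gamma\tau + \frac{w}{2} - \frac{37-b_2}{2}b_3 + 16\gamma b_3 = \gamma f(b_2) - R + P_0(b_2).
\]
We choose $\gamma$, depending on $b_2$, so that the AM--GM lower bound \eqref{eq:AMGMcomparison} is nonnegative. Then every term on the left is $\ge 0$: the exterior terms by \eqref{eq:AMGMcomparison}, $\tau$ and $w$ trivially, and the $b_3$ terms because $b_2 \ge 27$ and $\gamma>0$ make $(b_2-37)/2 + 16\gamma$ nonnegative once $\gamma \ge 5/16$. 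It follows that
\[
R \le P_0(b_2) + \gamma f(b_2).
\]
Since $f(b_2)$ is cubic with negative leading coefficient, for large $b_2$ the right side falls below $1440$, contradicting \eqref{eq:segre4lower}. For the intermediate range we use the inequality only at the specific values $b_2 = 27, 28, \dots$ up to the point where the asymptotic argument takes over.

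Concretely, for the AM--GM bound with $b_2$ large, $\sqrt{23\gamma(b_2+1)(b_2+2)/(b_2(b_2-1))} \approx \sqrt{23\gamma}$. The requirement $2\sqrt{23\gamma} \ge 8\gamma + 2 + \varepsilon$ holds for $\gamma$ in an interval around $23/64 \approx 0.36$. The two roots of $8\gamma - 2\sqrt{23}\sqrt{\gamma} + 2 = 0$ give $\sqrt\gamma \in [(\sqrt{23}-\sqrt7)/8, (\sqrt{23}+\sqrt7)/8]$, i.e.\ roughly $\gamma \in [0.06, 0.82]$.

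For $b_2 = 27$ we have $f(27) = (-19683 + 10935 + 5292 + 420)/6 = -506$ and $P_0(27) = 729 - 1620 + 4068 = 3177$. This gives $R \le 3177 - 506\gamma$, which would need $\gamma \ge 3.4$ to reach a contradiction. That value is outside the allowed window, so the naive choice fails at $b_2 = 27$.

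The main obstacle is therefore the finite range of $b_2$ just above $26$, where AM--GM is too lossy. There we would instead use the exact integer $x = k(b_2+2-k)$ and the finitely many exterior powers, rather than the AM--GM bound. We would also bring in \eqref{eq:oddSegrebudget} (that is, $K \le 0$) as a second inequality, and combine all three relations:
\begin{itemize}
\item Salamon's equation, which forces $\sum m_k \operatorname{Sal}(E_k) \le f(b_2) < 0$. Only $E_k$ with negative $\operatorname{Sal}$, i.e.\ $x > (b_2+1)(b_2+2)/8$, which means middle-range $k$, can carry weight.
\item The bound $1440 \le R \le 2880(b_2+4)/(b_2+1)$.
\item The $K \le 0$ budget \eqref{eq:oddSegrebudget}.
\end{itemize}
For each $k$ we would compute $\mathcal{R}(E_k)/|\operatorname{Sal}(E_k)|$ exactly. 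Since $\operatorname{Sal}$ is very negative in the middle, the required $\sum m_k|\operatorname{Sal}|$ is about $|f|$, so $R$ lies near $P_0 + \rho|f|$ with $\rho = \mathcal{R}/\operatorname{Sal}$. We would then check that the resulting interval for $R$ misses $[1440, 2880(b_2+4)/(b_2+1)]$, or conflicts with $K \le 0$, for every $b_2 \ge 27$. This reduces to a linear-programming check in finitely many ratios per $b_2$. A uniform ratio estimate, which is the $\gamma$ trick applied with the upper bound on $R$ as well, handles all large $b_2$ at once.
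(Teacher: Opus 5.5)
\textbf{There is a genuine gap.} You set up the same combination as the paper: $\gamma$ times \eqref{eq:Salamonmodules} minus \eqref{eq:Smodules}, with AM--GM on each exterior term. But you only extract $R \le P_0(b_2) + \gamma f(b_2)$. As you note yourself, this is useless for $b_2$ just above $26$; at $b_2 = 27$ it reads $R \le 3177 - 506\gamma$. So a finite range of $b_2$ is left unproved.

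The remedy you sketch cannot close that range. A linear-programming check on the ratios $\mathcal{R}(E_k)/\operatorname{Sal}(E_k)$, even with $R \le 2880(b_2+4)/(b_2+1)$ and $K \le 0$ added, treats the multiplicities as nonnegative reals. That relaxation is feasible at $b_2 = 27$. Take $b_3 = w = \tau = 0$, $m_6 = 506/170820 \approx 0.003$, and all other multiplicities zero. Since $\operatorname{Sal}(E_6) = -170820$ and $\mathcal{R}(E_6) = -129490$:
\begin{itemize}
\item Salamon's relation holds exactly.
\item $R \approx 3177 - 384 \approx 2793$, which lies in $[1440, 3188]$.
\item The right side of \eqref{eq:oddSegrebudget} is about $828 - 346 > 0$.
\end{itemize}
Your heuristic $\sum m_k|\operatorname{Sal}(E_k)| \approx |f(b_2)|$ is exactly this continuous picture, so no choice of ratios or of $\gamma$ yields a contradiction there.

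\textbf{The missing idea is integrality, combined with an AM--GM margin proportional to $b_4(E_k)$.} Because $f(b_2) < 0$ and $\operatorname{Sal}(E_k) > 0$ for $k \le 4$, Salamon's relation forces some $E_k$ with $k \ge 5$ (or a middle half-summand) to occur with multiplicity at least one. Hence $b_4 - \binom{b_2+1}{2} \ge \binom{b_2}{4}$. A single such summand already overshoots: at $b_2 = 27$ one has $|\operatorname{Sal}(E_k)| \ge |\operatorname{Sal}(E_5)| = 21645 \gg 506$.

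The paper exploits this as follows. It takes $\gamma = 1/2$ and shows that the AM--GM lower bound in \eqref{eq:AMGMcomparison} exceeds $1/3$ for all $b_2 \ge 27$, that is, $\mathcal{R}(E_k) < \frac12\operatorname{Sal}(E_k) - \frac13 b_4(E_k)$. With $Q_0 = P_0 + f/2$, this gives
\[
R \le Q_0(b_2) - \tfrac13\bigl(b_4 - \tbinom{b_2+1}{2}\bigr) + \tfrac{21-b_2}{2}b_3 - \tfrac{w}{2} - \tfrac{\tau}{2}.
\]
Hence $R \le 2924 - \frac13\binom{27}{4} = -2926 < 0$, a contradiction. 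This handles every $b_2 \ge 27$ at once. It needs neither the upper bound on $R$ nor $K \le 0$, and no split into asymptotic and finite ranges.
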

\begin{proof}
Assume for contradiction that $b_2 \ge 27$. Setting $\gamma = 1/2$ in~\eqref{eq:AMGMcomparison}, we find that
\begin{equation}
  \mathcal{R}(E_i) < \frac{1}{2}\operatorname{Sal}(E_i) - \frac{1}{3}b_4(E_i)
\end{equation}
holds for all exterior summands. Indeed, for $z = b_2-27 \ge 0$, clearing the positive denominators in the required squared inequality gives
\[
  \begin{aligned}
    &414b_2(b_2-1)(b_2+1)(b_2+2) - \bigl(19b_2(b_2-1)+69(b_2+1)\bigr)^2 \\
    &\qquad = 53z^4+4652z^3+139454z^2+1521660z+2817036 > 0.
  \end{aligned}
\]
Thus the AM--GM lower bound in~\eqref{eq:AMGMcomparison} is strictly greater than $1/3$. Combining this inequality with~\eqref{eq:Smodules} and~\eqref{eq:Salamonmodules} gives
\begin{equation}
  R \le Q_0(b_2) - \frac{1}{3}\left(b_4 - \binom{b_2+1}{2}\right) + \frac{21-b_2}{2}b_3 - \frac{w}{2} - \frac{\tau}{2},
\end{equation}
where $Q_0(b_2) = \frac{-b_2^3 + 27b_2^2 - 524b_2 + 49236}{12}$. For $b_2 \ge 27$, the function $Q_0$ is strictly decreasing, with $Q_0(27) = 2924$. Since $f(b_2) < 0$, Salamon's relation~\eqref{eq:Salamonmodules} forces the existence of at least one summand with $\operatorname{Sal}(E_i) < 0$. However, $\operatorname{Sal}(E_k) > 0$ for all $1 \le k \le 4$; thus, we must have $k \ge 5$, which forces $b_4 - \binom{b_2+1}{2} \ge \binom{b_2}{4}$. For either middle exterior summand in type $D_r$, the same lower bound holds because $b_4(E_r^\pm) = \frac{1}{2}\binom{b_2}{r-1} \ge \binom{b_2}{4}$ for even $b_2 \ge 28$. Consequently,
\begin{equation}
  0 < R \le 2924 - \frac{1}{3} \binom{27}{4} = -2926,
\end{equation}
which is an contradiction.
\end{proof}

\begin{lemma}\label{prop:highbodd}
Let $X$ be an IHS sixfold with $b_2(X) \ge 19$. Then $b_3(X) = 0$.
\end{lemma}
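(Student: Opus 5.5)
The plan is to feed the lower bound on odd cohomology into the two linear identities of Lemma~\ref{lem:sixfoldBudgetsCore}, and to show that the Segre lower bound $R \ge 1440$ from~\eqref{eq:segre4lower} cannot then hold. Assume $b_3 > 0$ with $19 \le b_2 \le 26$; the range is finite by Proposition~\ref{prop:26}. By Lemma~\ref{prop:oddgrowth} we have $u \ge 1$, so $b_3 = d_{b_2}u \ge d_{b_2} \ge 512$, and $w = d_{b_2}v \ge 0$. For $b_2 \in \{19,20\}$ the parity statement improves this to $u \ge 2$, hence $b_3 \ge 1024$.

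The key step is the identity~\eqref{eq:Salamonmodules}. Its right-hand side $f(b_2) - 16b_3$ is then hugely negative: $f(19) = 450$, while $16b_3 \ge 16384$. It is compensated only by exterior summands with $\operatorname{Sal}(E_k) < 0$. I then compare the contributions of each exterior summand to~\eqref{eq:Smodules} and~\eqref{eq:Salamonmodules} through a multiplier $\gamma > 0$, in the spirit of Proposition~\ref{prop:26}. Suppose that for every $E_k$ (including $E_r^\pm$ in type $D_r$)
\[
  \mathcal{R}(E_k) \le \gamma \operatorname{Sal}(E_k) + \varepsilon\, b_4(E_k).
\]
Then~\eqref{eq:Smodules} and~\eqref{eq:Salamonmodules} give
\[
  R \le P_0(b_2) + \gamma f(b_2) + \Bigl(\tfrac{37-b_2}{2} - 16\gamma\Bigr) b_3 - \gamma\tau - \tfrac{w}{2} + \varepsilon \sum_i m_i b_4(E_i).
\]
For $b_2 \ge 19$ the coefficient of $b_3$ is at most $9 - 16\gamma$. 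A quick check at $b_2 = 19$ shows that $\gamma \approx 0.6$ (where $\varepsilon \le 0$ by~\eqref{eq:AMGMcomparison}) is not enough: it only gives roughly $R \lesssim 3559 - 614$. So I need $\gamma$ near $1$, where~\eqref{eq:AMGMcomparison} leaves a small positive $\varepsilon$. The proof therefore needs a separate control of $\sum_i m_i b_4(E_i) = b_4 - \binom{b_2+1}{2}$.

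For that control I would use the Segre inequality $K \le 0$ in its form~\eqref{eq:oddSegrebudget}, together with the upper bound~\eqref{eq:sixfoldC2UpperCore}, which gives $R \le 2880(b_2+4)/(b_2+1)$. Alternatively, and more robustly, I would view everything as a small linear program in the nonnegative variables $m_k, \tau, u, v$ for each fixed $b_2$. Its constraints are~\eqref{eq:Salamonmodules}, $R \ge 1440$ with $R$ given by~\eqref{eq:Smodules}, and~\eqref{eq:oddSegrebudget}. A dual certificate for this system is a pair of multipliers $\gamma$ on Salamon and $\delta$ on the $K$-budget. Concretely, I would exhibit for each $b_2 \in \{19,\dots,26\}$ a pair $(\gamma,\delta)$ making every per-summand coefficient nonpositive. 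This should be possible because summands with $\operatorname{Sal} < 0$ have $k \ge 5$, large $b_4$, and strongly negative $\mathcal{R}$, since $h^{3,1}(E_k)$ is comparable to $b_4(E_k)$. With such a pair, the coefficient of $b_3$ becomes negative enough that $b_3 \ge d_{b_2}$ (or $\ge 1024$) forces $R < 1440$ or $K > 0$, a contradiction.

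I expect the main obstacle to be exactly this per-$k$ coefficient check uniformly in $k$. In particular the middle summands $E_r^\pm$ and small $k$ with $\operatorname{Sal} > 0$ may dictate $\gamma$. If the AM--GM bound~\eqref{eq:AMGMcomparison} is too lossy there, I would replace it by the exact ratios~\eqref{eq:exteriorratios} and check the finitely many pairs $(b_2,k)$ directly.
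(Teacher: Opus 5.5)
\textbf{Verdict.} Your framework is the right one, and the paper's proof is exactly such a certificate: a linear combination, over the nonnegative multiplicities, of \eqref{eq:Salamonmodules}, $R\ge 1440$, the $K$-budget \eqref{eq:oddSegrebudget}, and the parity of $u$ at $b_2\in\{19,20\}$. But you stop before the only nontrivial step. You assert that suitable multipliers ``should'' exist, and your guidance for finding them points the wrong way in both regimes. As written, this is a gap.

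\textbf{The case $b_2\ge 20$.} Here you need neither $\gamma$ near $1$, nor control of $\sum_i m_i b_4(E_i)$, nor the Beckmann--Song upper bound, nor Proposition~\ref{prop:26}. The AM--GM bound \eqref{eq:AMGMcomparison} is already positive at $\gamma=2/3$ for every $b_2\ge 20$. To see this, clear denominators and substitute $b_2=20+z$; you get a quartic in $z$ with positive coefficients. Hence $\varepsilon\le 0$ and
\[
  R \le U(b_2) - \tfrac{3b_2-47}{6}\,b_3, \qquad U = P_0 + \tfrac{2}{3}f,
\]
where $U$ is decreasing and $U(20)=3528$. Since $b_3\ge 1024$ (by parity at $b_2=20$, and because $d_{b_2}\ge 1024$ beyond), this gives $R\le 3928/3<1440$. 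Your pessimism came from extrapolating the failed check at $b_2=19$.

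\textbf{The case $b_2=19$.} Here the situation is the reverse of what you expect. No certificate built only from Salamon's identity and $1440\le R\le 3312$ can work, whatever $\gamma$ you choose. Indeed, the data $u=2$, $v=0$, $m_3=1$, $m_4=9$, $\tau=7$ satisfy \eqref{eq:Salamonmodules}, since
\[
  9(-1767)-38+7=-15934=f(19)-16\cdot 1024,
\]
and they give $R=1746$.

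\emph{Negative $\operatorname{Sal}$ at small $k$.} In this example $\operatorname{Sal}(E_3)=-38$ and $\operatorname{Sal}(E_4)=-1767$, contrary to your claim that only summands with $k\ge 5$ have negative $\operatorname{Sal}$. It is $E_4$, with $\mathcal{R}(E_4)/\operatorname{Sal}(E_4)=1190/1767\approx 0.67$, that defeats every $R$-based certificate.

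\emph{What actually works.} The inequality $K\le 0$ must carry the argument by itself. With multiplier $34/57$ on Salamon's identity, one has
\[
  \frac{(34/57)\operatorname{Sal}(E_k)-\mathcal{R}(E_k)-h^{3,1}(E_k)}{b_4(E_k)}=\frac{(x-68)(11x-630)}{171x}\ge 0,\qquad x=k(21-k),
\]
because no admissible $x$ lies in $(630/11,68)$. Then \eqref{eq:oddSegrebudget} becomes
\[
  22528u+7296v+17\tau\le 33756,
\]
so $u\le 1$, and evenness of $u$ forces $u=0$. This is consistent with the example above: there the $K$-budget fails by $410$, i.e.\ $K=4920>0$.
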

\begin{proof}
Suppose first that $b_2 \ge 20$. Evaluating~\eqref{eq:AMGMcomparison} at $\gamma = 2/3$ shows that $\mathcal{R}(E_i) \le \frac{2}{3}\operatorname{Sal}(E_i)$ for all exterior summands. Indeed, for $z = b_2-20 \ge 0$,
\[
  \begin{aligned}
    &552b_2(b_2-1)(b_2+1)(b_2+2) - \bigl(22b_2(b_2-1)+69(b_2+1)\bigr)^2 \\
    &\qquad = 68z^4+4476z^3+99563z^2+779730z+692639 > 0,
  \end{aligned}
\]
so the AM--GM lower bound is positive. Hence,
\begin{equation}
  R \le U(b_2) - \frac{3b_2-47}{6} b_3, \qquad \text{where } U(b_2) = \frac{-b_2^3 + 24b_2^2 - 344b_2 + 37032}{9}.
\end{equation}
The polynomial $U(b_2)$ decreases monotonically for $b_2 \ge 20$, with $U(20) = 3528$. If $b_3 > 0$, Lemma~\ref{prop:oddgrowth} gives that $b_3 \ge 1024$. This implies $R \le 3528 - \frac{13}{6}(1024) = \frac{3928}{3} < 1440$, contradicting the lower bound $R \ge 1440$ from~\eqref{eq:segre4lower}.

Now suppose $b_2 = 19$. Setting $x = k(21-k)$, we verify that
\begin{equation}
  \frac{(34/57)\operatorname{Sal}(E_k) - \mathcal{R}(E_k) - h^{3,1}(E_k)}{b_4(E_k)} = \frac{(x-68)(11x-630)}{171x} \ge 0,
\end{equation}
because no allowable integer value of $x$ falls in the interval $(630/11, 68)$. Together with~\eqref{eq:Salamonmodules} and~\eqref{eq:oddSegrebudget}, this implies
\begin{equation}\label{eq:odd19budget}
  22528 u + 7296 v + 17\tau \le 33756.
\end{equation}
This forces $u \le 1$. By Lemma~\ref{prop:oddgrowth}, $u$ must be even, so $u = 0$, which proves $b_3 = 0$.
\end{proof}

\begin{lemma}\label{prop:endpointfamilies}
For $b_2 \in \{24, 25, 26\}$, only the exterior modules $E_1, E_2, E_3, E_4$ and the trivial representation can appear outside $\SH$. In each case, $m_4 = 1$, and the multiplicities satisfy:
\begin{align}
  b_2 = 24 \colon & \quad 18m_1 + 133m_2 + 392m_3 + \tau = 1232, \notag \\
                  & \quad R = 1939 + 2m_1 + 25m_2 + 46m_3 - b_5/2; \label{eq:endpoint24} \\
  b_2 = 25 \colon & \quad 19m_1 + 151m_2 + 525m_3 + \tau = 695, \notag \\
                  & \quad R = 1974 + 2m_1 + 27m_2 + 71m_3 - b_5/2; \label{eq:endpoint25} \\
  b_2 = 26 \colon & \quad 20m_1 + 170m_2 + 676m_3 + \tau = 5, \notag \\
                  & \quad R = 2036 + 2m_1 + 29m_2 + 98m_3 - b_5/2. \label{eq:endpoint26}
\end{align}
\end{lemma}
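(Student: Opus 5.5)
The plan is to combine Salamon's identity \eqref{eq:Salamonmodules} with the formula \eqref{eq:Smodules} for $R$ and the lower bound $R \ge 1440$ from \eqref{eq:segre4lower}. This is the same mechanism as in Proposition~\ref{prop:26}, but with a sharper comparison constant tuned to $b_2 \in \{24,25,26\}$.

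\textbf{Step 1: simplify the identities.} Since $b_2 \ge 19$, Lemma~\ref{prop:highbodd} gives $b_3 = 0$, so $w = b_5$. The identities \eqref{eq:Smodules} and \eqref{eq:Salamonmodules} then read
\[
  R = P_0(b_2) + \sum_{i} m_i\,\mathcal{R}(E_i) - \frac{b_5}{2},
  \qquad
  \sum_i m_i \operatorname{Sal}(E_i) + \tau = f(b_2).
\]
I would tabulate the small values directly from the formulas for $b_4(E_k)$, $h^{3,1}(E_k)$ and $b_6(E_k)$ in Lemma~\ref{lem:sixfoldBudgetsCore}. For $b_2 = 24$ this gives $P_0 = 3204$ and $f = -10$, together with
\[
  \operatorname{Sal}(E_1,\dots,E_4) = (18,133,392,-1242),
  \qquad
  \mathcal{R}(E_1,\dots,E_4) = (2,25,46,-1265).
\]
The cases $b_2 = 25, 26$ are analogous. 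Once we know that only $E_1,\dots,E_4$ and $\mathbf{1}$ occur and that $m_4 = 1$, substitution immediately yields \eqref{eq:endpoint24}--\eqref{eq:endpoint26}. For example, $3204 - 1265 = 1939$ and $-10 + 1242 = 1232$. So the content of the lemma is the vanishing of $m_k$ for $k \ge 5$ (including the middle modules $E_r^\pm$) and the equality $m_4 = 1$.

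\textbf{Step 2: a single comparison inequality.} Choose $c > 0$ to be the maximum of $\mathcal{R}(E_k)/\operatorname{Sal}(E_k)$ over $k = 1,2,3$. For $b_2 = 24$ this is $25/133$; in the other cases it is likewise of size about $0.2$. Subtracting $c$ times Salamon's identity from the formula for $R$ gives
\[
  R \le P_0(b_2) + c\,f(b_2) - c\tau - \frac{b_5}{2}
  + \sum_{k \ge 4} m_k\bigl(\mathcal{R}(E_k) - c\operatorname{Sal}(E_k)\bigr).
\]
Here the $k \le 3$ terms have been dropped because they are $\le 0$, and the $\tau$ and $b_5$ terms are also $\le 0$.

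\textbf{Step 3: the case $k = 4$.} For $b_2 = 24$ the $k = 4$ coefficient is
\[
  -1265 + 1242c \approx -1031,
\]
so $m_4 \ge 2$ would give $R < 3204 - 2062 < 1440$. The same check works for $b_2 = 25, 26$. On the other hand, $f(b_2) < 0$ and $\operatorname{Sal}(E_k) > 0$ for $k \le 3$, so some summand with $k \ge 4$ must occur.

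\textbf{Step 4: the case $k \ge 5$ (main obstacle).} The hard step is to show that every coefficient $\mathcal{R}(E_k) - c\operatorname{Sal}(E_k)$ with $k \ge 5$, including the halved middle modules, is below $P_0 + cf - 1440$. Then a single such summand already forces $R < 1440$. Combined with Step 3, this leaves exactly $m_4 = 1$ and $m_k = 0$ for $k \ge 5$.

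For this I would use the ratio formulas \eqref{eq:exteriorratios}. With $x = k(b_2+2-k)$, for $k \ge 5$ the ratio $\operatorname{Sal}(E_k)/b_4(E_k)$ is negative, and $\mathcal{R}(E_k)/b_4(E_k)$ is negative once $x$ exceeds roughly $b_2(b_2-1)\cdot 2/23 + b_2 + 1$. Any $k$ for which this fails would be handled separately by the AM--GM bound \eqref{eq:AMGMcomparison} with $\gamma = c$. In all cases the coefficient is at most a negative constant times $b_4(E_k) \ge \binom{b_2}{4}/2$, which is in the tens of thousands, while $P_0 + cf$ is about $3200$. The finitely many values of $k$ for each $b_2$ can be checked directly.
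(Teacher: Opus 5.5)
Your outline is the paper's proof. Your $c$, the maximum of $\mathcal{R}(E_k)/\operatorname{Sal}(E_k)$ over $k\le 3$, is attained at $k=2$ for all three values of $b_2$, so it equals the paper's $\lambda=\mathcal{R}(E_2)/\operatorname{Sal}(E_2)$. Your Steps 2--3 are exactly the paper's $\delta_k$ bookkeeping: the check $F+2\delta_4<1440$, and the observation that $f(b_2)<0$ forces some summand with $k\ge 4$. Defining $c$ as a maximum is a nice touch, since it makes $\delta_k\le 0$ for $k\le 3$ automatic.

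The justification you sketch for Step 4 does not work as written, so the argument really rests on your final ``check directly'' fallback.

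\begin{itemize}
\item \emph{Sign slip.} You need $\delta_k<1440-(P_0+cf)$, not $\delta_k<P_0+cf-1440$.
\item \emph{The sign argument does not decide $\delta_k$.} For $k\ge 5$ you have $\operatorname{Sal}(E_k)<0$, so $-c\operatorname{Sal}(E_k)>0$ works against you. Negativity of $\mathcal{R}(E_k)$ alone therefore says nothing about the sign of $\delta_k$. At $b_2=24$, $k=5$, one has $\mathcal{R}/b_4=-4/3$ against $-c\operatorname{Sal}/b_4\approx+0.34$.
\item \emph{AM--GM cannot rescue this.} With $\gamma=c$, the AM--GM bound only gives the minimum over all real $x>0$ of $-\delta_k/b_4(E_k)$. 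That function vanishes at $x=2b_2$ (by the choice of $c$) but is minimized elsewhere. So the minimum is negative, about $-0.033$ for $b_2=24$, and certifies nothing.
\end{itemize}

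The clean argument, which is the paper's, uses the factorization
\[
  \frac{\delta_k}{b_4(E_k)}=-\frac{23\,(x-2b_2)(x-x_2)}{b_2(b_2-1)\,x},\qquad x_2=\frac{c\,(b_2+1)(b_2+2)(b_2-1)}{46}.
\]
This expression is negative and decreasing for $x\ge 5(b_2-3)$, because its maximum sits at $\sqrt{2b_2x_2}<5(b_2-3)$. Together with $b_4(E_k)\ge b_4(E_5)$, this gives:
\begin{itemize}
\item $\delta_k\le\delta_5$ for every unsplit $E_k$ with $k\ge 5$;
\item $\delta_r/2\le\delta_5/2$ for the middle modules $E_r^\pm$.
\end{itemize}
A single check $F+\delta_5/2<0$ for each $b_2$ then finishes the job. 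Alternatively, you can compute every $\delta_k$ with $5\le k\le r$ directly.
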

\begin{proof}
By Lemma~\ref{prop:highbodd}, $b_3 = 0$. For each $b_2 \in \{24, 25, 26\}$, we subtract $\lambda$ times~\eqref{eq:Salamonmodules} from~\eqref{eq:Smodules}, choosing $\lambda = \mathcal{R}(E_2)/\operatorname{Sal}(E_2)$. Set $\delta_k = \mathcal{R}(E_k) - \lambda\operatorname{Sal}(E_k)$ and $F = P_0(b_2) + \lambda f(b_2)$. Then
\begin{equation}
  R = F + \sum_{i \in \mathcal{I}} m_i\delta_i - \lambda\tau - b_5/2,
\end{equation}
where $\delta_i = \delta_r/2$ for either middle exterior summand in type $D_r$. For $k \ge 5$, the expression
\begin{equation}
  \frac{\delta_k}{b_4(E_k)} = 2 + 8\lambda - \frac{\lambda(b_2+1)(b_2+2)}{x} + \frac{23(b_2+1-x)}{b_2(b_2-1)}
\end{equation}
is negative at $x = 5(b_2-3)$ and decreasing for larger $x$, since
\[
  \lambda(b_2+1)(b_2+2)b_2(b_2-1) < 23\bigl[5(b_2-3)\bigr]^2
\]
holds at $b_2 = 24,25,26$. For an unsplit exterior power with $k \ge 5$, the inequality $b_4(E_k) \ge b_4(E_5)$ therefore gives $\delta_k \le \delta_5 < 0$. Either middle summand contributes $\delta_r/2 \le \delta_5/2$. Furthermore, direct substitution gives $\delta_k \le 0$ for $1 \le k \le 4$. The relevant numerical bounds are
\begin{equation}
  \begin{array}{c|cc}
    b_2 & F + \delta_5/2 & F + 2\delta_4 \\ \hline
    24 & -39420/19 & 151492/133 \\
    25 & -821777/302 & 155720/151 \\
    26 & -57428/17 & 16053/17
  \end{array}
\end{equation}
Hence, any non-zero multiplicity with $k \ge 5$, including either middle exterior summand, forces $R \le F + \delta_5/2 < 0$, a contradiction. Furthermore, since $\operatorname{Sal}(E_k) > 0$ for $1 \le k \le 3$ while $f(b_2) < 0$, we must have $m_4 \ge 1$. If $m_4 \ge 2$, then $R \le F + 2\delta_4 < 1440$, violating~\eqref{eq:segre4lower}. Thus, $m_4 = 1$, and substituting this value into the respective systems yields~\eqref{eq:endpoint24}--\eqref{eq:endpoint26}.
\end{proof}

\begin{lemma}\label{prop:24segre}
We have $b_2(X) \le 24$. If $b_2(X) = 24$, then
\begin{equation}\label{eq:24segrebudget}
  \begin{gathered}
    b_3 = b_5 = 0, \quad m_4 = 1, \quad 18m_1 + 133m_2 + 392m_3 + \tau = 1232, \\
    J := 2m_1 + 26m_2 + 68m_3 \ge 188, \\
    R = 1939 + 2m_1 + 25m_2 + 46m_3, \qquad K = 12(188 - J).
  \end{gathered}
\end{equation}
In particular, $R$ and $K$ satisfy
\begin{equation}\label{eq:projectiveHRrectangle}
  2077 \le R \le 2170, \qquad -624 \le K \le 0.
\end{equation}
\end{lemma}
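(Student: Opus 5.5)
We start from Lemma~\ref{prop:endpointfamilies}, which for $b_2\in\{24,25,26\}$ already gives $b_3=0$, $m_4=1$, the Salamon constraint and the formula for $R$ in terms of $(m_1,m_2,m_3,b_5)$. Two further inputs will be combined with it: the Segre inequalities $R\ge1440$ from \eqref{eq:segre4lower} and $K\le0$ in the form \eqref{eq:oddSegrebudget}, and the odd-cohomology structure of Lemma~\ref{prop:oddgrowth}, namely $b_5=d_{b_2}v$ with $d_{b_2}=2^{\lfloor (b_2-1)/2\rfloor}\ge 2048$ (here $u=0$ since $b_3=0$). Proposition~\ref{prop:26} then yields $b_2\le 24$ once $b_2=25,26$ are excluded.

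\emph{Step 1 (excluding $b_2=25,26$).} For $b_2=26$ the Salamon equation $20m_1+170m_2+676m_3+\tau=5$ forces $m_1=m_2=m_3=0$. Then $R=2036-b_5/2$, and $R\ge1440$ forces $b_5<2048\cdot 1$, hence $v=0$ and $R=2036$. Next I compute $K$ through \eqref{eq:oddSegrebudget}. With $b_3=w=0$, $m_4=1$, $b_4(E_4)=\binom{26}{3}$ and $h^{3,1}(E_4)=\binom{24}{2}$, the right-hand side is $26^2-57\cdot26+1638+2\binom{26}{3}-22\binom{24}{2}$; I expect this to be negative, which contradicts $K\le0$. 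For $b_2=25$ the argument is the same. Here $R\ge1440$ bounds $b_5/2$, and since $d_{25}=4096$ this gives $v=0$ unless $m_3$ is large. I will then evaluate $K$ as a function of $(m_1,m_2,m_3)$ via $K=12(2424+\chi^1-R)$ with $\chi^1$ from \eqref{eq:chi1modules}. The result is a linear form, and it should be positive on the region cut out by $19m_1+151m_2+525m_3\le695$ with nonnegative integers. This is a finite check over the vertices or integer points, so $K>0$, a contradiction. The main obstacle is getting the signs right here: whether $K\le0$ or $R\ge1440$ is the active constraint. If $K\le0$ fails to exclude some lattice point, I would try the bound \eqref{eq:sixfoldC2UpperCore}, $R\le 2880(b_2+4)/(b_2+1)$, as the extra inequality.

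\emph{Step 2 ($b_2=24$).} Since $d_{24}=2048$, the relation $R=1939+2m_1+25m_2+46m_3-b_5/2$ together with $R\ge1440$ and $K\le0$ should force $v=0$, hence $b_5=0$. For this I bound $2m_1+25m_2+46m_3$ using the Salamon equation $18m_1+133m_2+392m_3\le1232$. The maximum is about $231$ (near $m_2=9$), which stays below $1024+499$ only if combined with the $K$ budget, so I will check this carefully. Then I compute $\chi^1=-66-\sum m_ih^{3,1}(E_i)$ with $h^{3,1}(E_k)=\binom{22}{k-2}$, that is, $0,1,22,231$ for $k=1,\dots,4$. This gives $\chi^1=-297-m_2-22m_3$. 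Substituting into $K=12(2424+\chi^1-R)$ yields $K=12(188-2m_1-26m_2-68m_3)=12(188-J)$, and $K\le0$ becomes $J\ge188$.

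\emph{Step 3 (the rectangle \eqref{eq:projectiveHRrectangle}).} This step is a small integer linear program. I maximize and minimize $R-1939=2m_1+25m_2+46m_3$ and $J$ subject to $18m_1+133m_2+392m_3\le1232$, $J\ge188$, and $m_i\in\Z_{\ge0}$. For the minimum of $R$, the constraint $J\ge188$ pushes toward $m_3=2$, or $m_2$ near $8$ with some $m_1$. I would enumerate over $m_3\in\{0,1,2,3\}$ and, for each, over $m_2$, obtaining $R\ge2077$ and $R\le2170$. The bound $K\ge-624$ is equivalent to $J\le240$, which is the maximum of $J$ on the same region; I expect it to be attained at $m_2=9$, giving $J=234+2m_1$ with $m_1\le1$. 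I will confirm this by the same enumeration.
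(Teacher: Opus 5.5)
Your proposal is correct and follows essentially the same route as the paper. Salamon's equation together with $K\le 0$ excludes $b_2=26$, where the right-hand side of \eqref{eq:oddSegrebudget} is $-40$, and $b_2=25$, where integer enumeration gives $2m_1+28m_2+94m_3\le 124<128$; integrality is genuinely needed there, since the real vertex $m_2=695/151$ gives a value above $128$. At $b_2=24$, the bound $R\ge 1440$ alone already rules out $b_5\ge 2048$, because $2m_1+25m_2+46m_3\le 231<525$; no $K$ budget is needed there. Your Step~3 enumeration then gives $2077\le R\le 2166$ and $J\le 236$, which implies the stated rectangle. The minimum of $R$ occurs at $(m_1,m_2,m_3)=(0,0,3)$, not near $m_3=2$. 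The paper reaches the same bounds more economically, using coefficient-ratio comparisons with $18m_1+133m_2+392m_3\le 1232$ and a two-case argument for the lower bound.
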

\begin{proof}
By Proposition~\ref{prop:26}, we have $b_2 \le 26$, so we first exclude $b_2 = 25,26$. Lemma~\ref{prop:highbodd} gives $b_3 = 0$ in all the cases considered below.

At $b_2 = 26$, the nonnegativity of the multiplicities in~\eqref{eq:endpoint26} forces $m_1 = m_2 = m_3 = 0$ and $\tau = 5$. Substituting these values into~\eqref{eq:endpoint26}, \eqref{eq:chi1modules}, and~\eqref{eq:Schi} gives
\begin{equation}\label{eq:b26Kcontradiction}
  \frac{K}{12} = 40 + \frac{b_5}{2} > 0,
\end{equation}
contradicting~\eqref{eq:segre6budget}.

At $b_2 = 25$, the same identities, now using~\eqref{eq:endpoint25}, give
\begin{equation}\label{eq:b25Kidentity}
  \frac{K}{12} = 128 - 2m_1 - 28m_2 - 94m_3 + \frac{b_5}{2}.
\end{equation}
On the other hand, \eqref{eq:endpoint25} and $\tau \ge 0$ imply
\begin{equation}\label{eq:b25multiplicitybound}
  19m_1 + 151m_2 + 525m_3 \le 695, \qquad m_3 \le 1.
\end{equation}
Since $m_3$ is an integer, there are two cases. In each case, \eqref{eq:b25multiplicitybound} bounds $m_2$; eliminating $m_1$ with the same inequality then gives
\begin{equation}\label{eq:b25casebounds}
  \begin{aligned}
    m_3 = 0 &: \quad m_2 \le 4, \qquad 2m_1 + 28m_2 < 124, \\
    m_3 = 1 &: \quad m_2 \le 1, \qquad 2m_1 + 28m_2 + 94 \le 124.
  \end{aligned}
\end{equation}
Thus~\eqref{eq:b25casebounds}, combined with~\eqref{eq:b25Kidentity} and $b_5 \ge 0$, yields
\begin{equation}\label{eq:b25Kcontradiction}
  2m_1 + 28m_2 + 94m_3 \le 124, \qquad K \ge 48 > 0.
\end{equation}
This again contradicts~\eqref{eq:segre6budget}, so $b_2 \le 24$.

Now assume $b_2 = 24$. From~\eqref{eq:endpoint24}, we have
\begin{equation}\label{eq:b24multiplicitybound}
  18m_1 + 133m_2 + 392m_3 = 1232 - \tau \le 1232.
\end{equation}
The coefficients of $m_1,m_2,m_3$ in the expression for $R$ in~\eqref{eq:endpoint24} are at most $25/133$ times the corresponding coefficients in~\eqref{eq:b24multiplicitybound}. Since the multiplicities are nonnegative, these two equations imply
\begin{equation}\label{eq:b24Rwithb5}
  R \le 1939 + \frac{25}{133}\,1232 - \frac{b_5}{2} < 2171 - \frac{b_5}{2}.
\end{equation}
If $b_5 > 0$, Lemma~\ref{prop:oddgrowth} and~\eqref{eq:b24Rwithb5} give
\begin{equation}\label{eq:b24b5contradiction}
  b_5 \ge 2048, \qquad R < 1440,
\end{equation}
contradicting~\eqref{eq:segre4lower}. Hence $b_5 = 0$. Substituting this into~\eqref{eq:endpoint24}, \eqref{eq:chi1modules}, and~\eqref{eq:Schi}, we obtain
\begin{equation}\label{eq:b24RKidentities}
  \begin{gathered}
    R = 1939 + 2m_1 + 25m_2 + 46m_3, \\
    J := 2m_1 + 26m_2 + 68m_3, \qquad K = 12(188-J).
  \end{gathered}
\end{equation}
Combining~\eqref{eq:b24RKidentities} with~\eqref{eq:segre6budget} gives $J \ge 188$. Together with Lemmas~\ref{prop:highbodd} and~\ref{prop:endpointfamilies}, this proves~\eqref{eq:24segrebudget}.

Comparing coefficients with~\eqref{eq:b24multiplicitybound} again, we have
\begin{equation}\label{eq:b24RJupper}
  \begin{aligned}
    R - 1939 &\le \frac{25}{133}(18m_1 + 133m_2 + 392m_3) \le \frac{25}{133}\,1232 < 232, \\
    J &\le \frac{26}{133}(18m_1 + 133m_2 + 392m_3) \le \frac{26}{133}\,1232 < 241.
  \end{aligned}
\end{equation}
The integrality of $R$ and $J$ in~\eqref{eq:b24RKidentities}, together with~\eqref{eq:b24RJupper}, gives $R \le 2170$ and $J \le 240$. Substituting the latter bound into the formula for $K$ in~\eqref{eq:b24RKidentities} yields
\begin{equation}\label{eq:b24RKupperbounds}
  R \le 2170, \qquad J \le 240, \qquad K \ge -624.
\end{equation}

For the lower bound on $R$, \eqref{eq:b24multiplicitybound} and the integrality of the multiplicities imply
\begin{equation}\label{eq:b24smallmultiplicities}
  m_3 \le 3, \qquad m_2 \le 9 - 3m_3.
\end{equation}
If $m_3 \le 2$, combine~\eqref{eq:b24smallmultiplicities} with $J \ge 188$ from~\eqref{eq:24segrebudget} and subtract the expressions for $R$ and $J$ in~\eqref{eq:b24RKidentities}. If $m_3 = 3$, \eqref{eq:b24smallmultiplicities} forces $m_2 = 0$, and we use the expression for $R$ directly. These two cases give
\begin{equation}\label{eq:b24Rlowercases}
  \begin{aligned}
    m_3 \le 2 &: \quad R - 1939 = J - m_2 - 22m_3 \ge 141, \\
    m_3 = 3 &: \quad R - 1939 = 2m_1 + 138 \ge 138.
  \end{aligned}
\end{equation}
Thus~\eqref{eq:b24Rlowercases} implies $R \ge 2077$. Combining this with~\eqref{eq:b24RKupperbounds} and~\eqref{eq:segre6budget} proves~\eqref{eq:projectiveHRrectangle}.
\end{proof}

\section{The Projective Cotangent Bundle and Exclusion of \texorpdfstring{$b_2 = 24$}{b2 = 24}}\label{sec:geometry}

We pass to a very general deformation and construct a K\"ahler class on the projective cotangent bundle. The numerical bounds from the preceding section then produce a negative-definite two-plane, contradicting the Hodge--Riemann relations and excluding $b_2(X) = 24$.

\begin{proposition}\label{prop:nosubvar24}
Let $X$ be an IHS sixfold with $b_2(X) \ge 24$. Then a very general deformation of $X$ contains no positive-dimensional proper compact analytic subvarieties.
\end{proposition}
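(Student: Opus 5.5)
Pass to a very general point of the local period domain, where $\Pic(X)=0$ and, more strongly, the Hodge structure on all of $H^*(X,\Q)$ is as generic as the LLV action allows. The key input is Verbitsky's theorem on trianalytic subvarieties. For a very general complex structure in the twistor family, or more generally in the moduli space, every closed analytic subvariety is trianalytic, and its fundamental class is $\mathfrak{so}(4,1)$- (indeed $\mathfrak{g}$-) invariant. Our plan is to show that, for $b_2(X)\ge 24$, no invariant class of the required degree can come from a proper positive-dimensional subvariety. The steps are as follows.

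\textbf{Step 1.} Let $Z\subset X$ be an irreducible positive-dimensional proper analytic subvariety of a very general deformation, of complex dimension $d$. By Verbitsky, $Z$ is trianalytic, so $d$ is even and $d\in\{2,4\}$. Moreover, the class $[Z]\in H^{12-2d}(X,\Q)$ is of type $(p,p)$ for every complex structure in the deformation. Hence $[Z]$ is annihilated by the $\mathfrak{so}(b_2-2)$-part and lies in the $\mathfrak{g}_0$-invariant Hodge classes. Equivalently, $[Z]$ is a Hodge class on all deformations, so it lies in the subspace of $H^{8}$ or $H^{4}$ spanned by classes of LLV weight $\nu_1=0$ that are invariant under the Mumford--Tate group $\mathrm{SO}(H^2,q)$. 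In the decomposition of Lemma~\ref{lem:evenLLVsummands}, these invariants come only from $\SH$, which contributes the powers of the dual BBF class $\mathfrak q^\vee\in\SH^4$ and its square in degree $8$, and from the trivial summands $\mathbf 1$, which sit in degree $6$. The exterior powers $E_k$ with $k\ge1$ have no $\mathrm{SO}(b_2)$-invariants in degrees $4$ or $8$, since their degree-$4$ part is $\Lambda^{k-1}$ of the standard $b_2$-dimensional representation.

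\textbf{Step 2.} The trivial summands live only in $H^6$, while $[Z]$ lies in degree $4$ or $8$. Therefore $[Z]=a\,\mathfrak q^\vee$ or $[Z]=a\,(\mathfrak q^\vee)^2$ with $a\in\Q_{>0}$, up to also adding the $E$-invariants, which vanish. Then $Z$ is trianalytic, so it is hyperk\"ahler with respect to the restricted metric. Apply the Fujiki relation on $Z$ and compare $\int_Z h^{d}$ with $\int_X [Z]h^{d}$, which is a multiple of $q_X(h)^{d/2}$. Restricting to $Z$ gives a map $H^2(X)\to H^2(\tilde Z)$, where $\tilde Z$ is a resolution, and this map is compatible with the Hodge structures. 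For very general $X$, $H^2(X,\Q)$ is an irreducible Hodge structure. The restriction is nonzero because $\int_Z h^d>0$ for a K\"ahler class $h$, so it is injective. Hence $b_2(\tilde Z)\ge b_2(X)\ge 24$.

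\textbf{Step 3.} Now $\tilde Z$, or the desingularized trianalytic variety (Verbitsky shows trianalytic subvarieties have hyperk\"ahler normalizations), is a compact hyperk\"ahler manifold of dimension $2$ or $4$. Its irreducible factors would have to carry $H^2(X)$ injectively, and since $H^2(X)$ is an irreducible Hodge structure, a single factor must receive it. In dimension $2$, $b_2\le 22$. In dimension $4$, Guan's theorem gives $b_2\le 23$, or $b_2\le 22$ for a torus or a product. Both contradict $b_2\ge 24$, so no such $Z$ exists.

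The main obstacle is Step 2's injectivity together with the reduction of $\tilde Z$ to an irreducible hyperk\"ahler factor. One also has to make sure that genericity in the full moduli space, not only in a twistor line, yields trianalyticity. If that reduction proves delicate, we would instead argue directly from Step 1: show that $a\,\mathfrak q^\vee$ cannot be the class of an effective trianalytic cycle, for instance via Hodge index/positivity on $\SH$ combined with the signature $(3,b_2-3)$. We expect the Guan-based argument to be cleaner.
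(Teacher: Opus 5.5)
\paragraph{Overall.} Your main line matches the paper's. Mumford--Tate genericity makes every subvariety trianalytic, and the normalization is a compact hyperk\"ahler manifold. The Hodge morphism from $H^2(X,\Q)$ to the second cohomology of a factor of the Bogomolov cover is injective, because $H^2(X,\Q)$ is an irreducible Hodge structure and a K\"ahler class pulls back nontrivially. One then concludes with the K3 bound and Guan's bound.

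\paragraph{The gap: torus factors.} Step~3 asserts $b_2\le 22$ for a torus, and this is false. A complex torus of dimension $4$ has $b_2=\binom{8}{2}=28$. So suppose the normalization of a $4$-dimensional trianalytic $Z$ is finitely covered by a $4$-dimensional torus $T$. Then the inequality $b_2(T)\ge b_2(X)$ gives no contradiction for $24\le b_2(X)\le 28$. Betti numbers alone do not suffice. You need that the injection $H^2(X,\Q)\hookrightarrow H^2(T,\Q)$ is a morphism of Hodge structures, hence injective on $H^{1,1}$. That gives $22\le b_2(X)-2=h^{1,1}(X)\le h^{1,1}(T)=(\dim_\C T)^2\le 16$, a contradiction. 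This is exactly how the paper disposes of torus factors.

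\paragraph{Steps~1--2.} The invariant-theoretic description there is wrong, although your contradiction never uses it. The degree-$4$ part of $E_1=V$ is $\Lambda^0$ of the standard $b_2$-dimensional representation, i.e.\ trivial. So each copy of $E_1$ contributes an $\so(H^2,q_X)$-invariant $(2,2)$-line to $H^4$, and likewise to $H^8$, outside $\SH$. Thus $[Z]$ need not be a multiple of the dual BBF class or its square. Your fallback plan rests on that description and so starts from a false premise; drop it.

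\paragraph{Trianalyticity.} The paper settles this along the lines your Step~1 suggests. For a very general deformation, the special Mumford--Tate algebra equals the reduced LLV algebra (Green--Kim--Laza--Robles, Proposition~2.38(i)). This algebra contains the $\mathfrak{su}(2)$ of a hyperk\"ahler metric, so $[Z]$ is $SU(2)$-invariant and Verbitsky's criterion gives trianalyticity. Genericity in the full moduli space, not merely in a twistor line, is indeed needed, since it is also what makes $H^2(X,\Q)$ irreducible.
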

\begin{proof}
Let $X$ be a very general deformation. By~\cite[Proposition~2.38(i)]{GKLR}, the special Mumford--Tate algebra of $H^*(X,\Q)$ equals the reduced LLV algebra $\so(H^2(X,\Q),q_X)$. Fix a hyperk\"ahler metric on $X$. The Lie algebra of its $SU(2)$ action on cohomology is contained in the reduced LLV algebra~\cite[Section~2.2.2]{GKLR}, and hence, contained in the special Mumford--Tate algebra. The fundamental class of every irreducible compact analytic subvariety $Z \subset X$ is a rational Hodge class, hence is $SU(2)$-invariant. Therefore, $Z$ is trianalytic by~\cite[Theorem~3.5]{VerHypercomplex}.

By~\cite[Theorem~6.2 and its proof]{VerHypercomplex}, the normalization $\widehat{Z}$ is smooth, and the local branches of $Z$ are smooth submanifolds of $X$. Thus, the normalization map $j \colon \widehat{Z} \to X$ is an immersion compatible with the induced complex structures, and the pullback metric is hyperk\"ahler. By the Bogomolov decomposition theorem~\cite[Th\'eor\`eme~1(2)]{Beauville}, $\widehat{Z}$ has a finite \emph{\'etale} cover that is a product of a complex torus and irreducible hyperk\"ahler manifolds.

Fix a positive-dimensional factor $F$ in this cover. The induced map $j_F \colon F \to X$ is a holomorphic immersion. Since the standard representation of $\so(H^2(X,\Q),q_X)$ is irreducible, $H^2(X,\Q)$ has no nontrivial rational Hodge substructures. The kernel of the Hodge morphism $j_F^* \colon H^2(X,\Q) \to H^2(F,\Q)$ is therefore zero or all of $H^2(X,\Q)$. For a K\"ahler class $h$ on $X$, the class $j_F^*h$ is K\"ahler on $F$, so $j_F^*$ is nonzero. Hence, $j_F^*$ is injective, and the same holds over $\C$, preserving Hodge types.

If $0 < \dim_\C Z < 6$, its dimension must be even, so $\dim_\C Z \in \{2, 4\}$. The preceding injection gives $b_2(F) \ge b_2(X) \ge 24$ for each positive-dimensional irreducible hyperk\"ahler factor $F$. However, such a factor has $b_2(F) = 22$ in dimension two, and $b_2(F) \le 23$ in dimension four by Guan~\cite[Theorem~1]{Guan}.

For a positive-dimensional torus factor $T$, the same injection on $H^{1,1}$ gives
\begin{equation}
  22 \le b_2(X)-2 = h^{1,1}(X) \le h^{1,1}(T) = (\dim_\C T)^2 \le 16,
\end{equation}
a contradiction. Hence, no such subvariety exists.
\end{proof}

\begin{remark}
Alternatively, the injection in Proposition~\ref{prop:nosubvar24} follows from orthogonal invariant theory. For a factor immersion $j_F \colon F \to X$ as above, write $\dim_\C F = 2m > 0$. The polarization of the polynomial $\eta \mapsto \int_F (j_F^*\eta)^{2m}$ is a rational Hodge morphism $\Sym^{2m} H^2(X,\Q) \to \Q(-2m)$. By the same Mumford--Tate genericity, this polynomial is orthogonally invariant. Classical invariant theory therefore gives the Fujiki-type relation
\[
  \int_F (j_F^*\eta)^{2m} = c_F q_X(\eta)^m, \qquad \eta \in H^2(X,\C),
\]
with $c_F > 0$ by evaluation on a K\"ahler class. If $j_F^*u = 0$, replacing $\eta$ by $\eta+tu$ and differentiating at $t=0$ gives $q_X(\eta,u) = 0$ whenever $q_X(\eta) \ne 0$. Nondegeneracy of $q_X$ then gives $u = 0$.

Soldatenkov--Verbitsky~\cite[Theorem~1.16 and Corollary~1.17]{SVTrianalytic} proved the corresponding Fujiki-type relation and $H^2$-injectivity for normalizations of positive-dimensional absolutely trianalytic subvarieties. In the very general setting considered here, the required relation follows directly from Mumford--Tate genericity.
\end{remark}

In the range $b_2(X) \ge 24$, we choose $X$ to be simultaneously Mumford--Tate generic and very general in the sense of Anella--H\"oring~\cite[Lemma~3.4]{AHCotangent}: $\Pic(X) = 0$, and every irreducible component of the cycle space of $\mathbb{P}(\Omega_X)$ deforms under small deformations of $X$. These conditions hold on a very general deformation. By Proposition~\ref{prop:nosubvar24} and~\cite[Corollary~3.5]{AHCotangent}, every proper irreducible compact analytic subvariety of $Y = \mathbb{P}(\Omega_X)$ is contained entirely within a fiber of $\pi \colon Y \to X$. We fix such a deformation, noting that the Betti numbers, $R$, and $K$ are deformation invariants.

Let $Q = q_X^\vee \in H^{2,2}(X,\R)$ denote the dual Beauville--Bogomolov--Fujiki class: for any basis $(e_i)$ of $H^2(X,\R)$ with inverse metric matrix $(q_X^{ij})$, $Q = \sum_{i,j} q_X^{ij} e_i e_j$. For any characteristic class $a \in H^{4k}(X,\Q)$ and $N = 3-k$, contracting with the Laplacian $\Delta_{q_X}$ yields
\begin{equation}\label{eq:contraction}
  \int_X a Q^j h^{2N-2j} = C_X(a) \prod_{\ell=0}^{j-1} \frac{b_2 + 2N - 2 - 2\ell}{2N - 1 - 2\ell} \, q_X(h)^{N-j}, \qquad 0 \le j \le N.
\end{equation}

On $Y = \mathbb{P}(\Omega_X)$, write $\zeta = c_1(\mathcal{O}_Y(1))$. For a K\"ahler class $h$ on $X$ with $t = q_X(h) > 0$, set $L = \zeta + h$. Using Lemma~\ref{lem:projectivebundle} and Lemma~\ref{lem:fujiki}, the top intersection of $L$ is given by the volume polynomial
\begin{equation}\label{eq:projectiveHRvolume}
  P(t) := \int_Y L^{11} = 462Rt^3 - 396Rt^2 + (132R - 190080)t + K.
\end{equation}

\begin{proposition}\label{prop:strictrootkahler}
Let $X$ be the fixed very general IHS sixfold above, with $b_2(X) = 24$ and satisfying~\eqref{eq:projectiveHRrectangle}. Let $h$ be a K\"ahler class on $X$ normalized such that $q_X(h) = 3/4$. Then $L = \zeta + h$ is a K\"ahler class on $Y = \mathbb{P}(\Omega_X)$.
\end{proposition}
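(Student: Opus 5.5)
We apply the Demailly--P\u{a}un numerical criterion~\cite[Theorem~0.1]{DemaillyPaun} on the compact K\"ahler manifold $Y$. First we exhibit a K\"ahler class of the form $\zeta + s h$ for some $s \gg 0$ on the same ray structure. Then we show that the segment $L_\tau = \zeta + \tau h$ ($\tau \ge$ some large value down to $1$) stays inside the positive cone. The criterion characterises the K\"ahler cone as a connected component of the set of real $(1,1)$-classes $\alpha$ with $\int_Z \alpha^{\dim Z} > 0$ for every irreducible analytic subvariety $Z \subset Y$. Hence it suffices to check positivity of these intersection numbers along a path joining a known K\"ahler class to $L$.

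For the starting point, note that $\zeta$ restricted to each fiber $\mathbb{P}^5$ is the hyperplane class. Therefore $\zeta + s h$ is K\"ahler for $s$ large: add a large multiple of a K\"ahler form on $X$ to a form representing $\zeta$ that is positive on fibers, as in the standard construction via a hermitian metric on $T_X$. We then deform along $L_s = \zeta + s h$, $s \in [1, s_0]$, and verify the criterion for each $s$.

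The subvarieties come in two kinds. By the very general choice and~\cite[Corollary~3.5]{AHCotangent}, every proper irreducible $Z$ lies in a fiber $\pi^{-1}(x) \cong \mathbb{P}^5$. There $L_s|_Z = \zeta|_Z$ is the restriction of the hyperplane class, so $\int_Z L_s^{\dim Z} = \deg Z > 0$ independently of $s$. The only remaining condition is $Z = Y$, that is, $\int_Y L_s^{11} > 0$. Scaling $h$ by $s$ replaces $t$ by $s^2 t$, so we need $P(s^2 \cdot 3/4) > 0$ for all $s \ge 1$, i.e.\ $P(t) > 0$ for $t \ge 3/4$, with $P$ as in~\eqref{eq:projectiveHRvolume}. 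Since each $L_s$ then satisfies all inequalities, the path stays in one connected component of the positive cone. This is the component containing the K\"ahler class $L_{s_0}$, so $L = L_1$ is K\"ahler.

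The main step is therefore the elementary estimate $P(t) > 0$ on $[3/4, \infty)$ under~\eqref{eq:projectiveHRrectangle}. At $t = 3/4$,
\[
P(3/4) = 462R\cdot\tfrac{27}{64} - 396R\cdot\tfrac{9}{16} + 99R - 142560 + K = \tfrac{2583}{32}R - 142560 + K.
\]
With $R \ge 2077$ and $K \ge -624$ this gives $P(3/4) \ge 167645 - 143184 > 0$. For the derivative, $P'(t) = 1386Rt^2 - 792Rt + 132R - 190080$. Its value at $3/4$ is $\tfrac{3267}{8}R - 594R + 132R - 190080 = \tfrac{867}{8}R - 190080$, which is positive once $R \ge 1754$. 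Moreover $P'$ is increasing for $t > 2/7$. Hence $P$ is increasing and positive on $[3/4, \infty)$.

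The delicate point is confirming that the hypotheses of~\cite[Corollary~3.5]{AHCotangent} and Proposition~\ref{prop:nosubvar24} indeed cover every subvariety. This includes $Z$ of dimension between $6$ and $10$: it must not dominate $X$, and one that surjects onto a proper subvariety of $X$ is excluded because none exists. Given that, the numerical check above is routine.
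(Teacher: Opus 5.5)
Your proof is correct and follows the paper's argument: Demailly--P\u{a}un applied along the ray $\zeta+sh$. Proper subvarieties are handled by the fibre-containment property, and $Y$ itself by $P(t)>0$ on $[3/4,\infty)$, which follows from the signs of $P(3/4)$, $P'(3/4)$ and $P''$. Only your arithmetic needs fixing. In fact $P(3/4)=\tfrac{2277}{32}R-142560+K\ge\tfrac{147441}{32}$, and $P'(3/4)=\tfrac{2541}{8}R-190080$ since $1386\cdot\tfrac{9}{16}=\tfrac{6237}{8}$. So the margin at $t=3/4$ is much thinner than you claim and genuinely needs $R\ge 2077$, but both quantities are still positive.
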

\begin{proof}
Set $L_s = \zeta + \sqrt{4s/3}\,h$ for $s \ge 3/4$. By the choice of deformation, every proper irreducible compact analytic subvariety $Z \subset Y$ is contained in a fiber, on which $\zeta$ restricts to the Fubini--Study class; hence $\int_Z L_s^{\dim Z} > 0$. For the total space $Y$, evaluating~\eqref{eq:projectiveHRvolume} at $t = 3/4$ gives
\begin{equation}
  P(3/4) = \frac{2277R}{32} - 142560 + K \ge \frac{147441}{32} > 0,
\end{equation}
and its derivative satisfies
\begin{equation}
  P'(3/4) = \frac{2541R}{8} - 190080 \ge \frac{3757017}{8} > 0.
\end{equation}
Furthermore, $P''(s) = R(2772s - 792) > 0$ for all $s \ge 3/4$. Thus, the volume $\int_Y L_s^{11}$ remains strictly positive along the entire ray. The class $\zeta$ is relatively K\"ahler, so $L_s$ is K\"ahler for $s \gg 0$. The ray $(L_s)_{s\ge3/4}$ is connected and numerically positive on every irreducible compact analytic subvariety of $Y$, including $Y$ itself. It therefore lies in the component of the numerically positive cone containing the K\"ahler cone. By Demailly and P\u{a}un~\cite[Theorem~0.1]{DemaillyPaun}, this component is the K\"ahler cone. In particular, $L = L_{3/4}$ is K\"ahler.
\end{proof}

\begin{proposition}\label{prop:projectiveHRsignature}
Let $h$ be a K\"ahler class on $X$, and assume that $L = \zeta + h$ is K\"ahler on $Y$. Then the symmetric bilinear form
\begin{equation}
  B_L(\alpha, \beta) = \int_Y \alpha \cup \beta \cup L^7
\end{equation}
has negative index one on the four-dimensional subspace
\begin{equation}
  W = \operatorname{span}_\R\{\zeta^2, \, \zeta h, \, h^2, \, Q\} \subset H^{2,2}(Y,\R).
\end{equation}
\end{proposition}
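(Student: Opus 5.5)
The plan is to apply the Hodge--Riemann bilinear relations for the Kähler class $L$ on the compact Kähler manifold $Y$ of dimension $11$, in degree $(2,2)$. Since $2\cdot 2 + 7 = 11$, the form $B_L$ is the Lefschetz pairing on $H^{2,2}(Y,\R)$. Hard Lefschetz gives the orthogonal Lefschetz decomposition
\[
  H^{2,2}(Y,\R) = L^2 H^{0,0} \oplus L\, P^{1,1} \oplus P^{2,2},
\]
where $P^{p,q}$ denotes the $L$-primitive classes. By the Hodge--Riemann relations, $B_L$ has sign $(-1)^{k}$ on $L^{2-k}P^{k,k}$ for $k=0,1,2$. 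Concretely, $B_L$ is positive on $L^2H^0$, negative on $LP^{1,1}$ and positive on $P^{2,2}$. On real $(k,k)$-classes the relevant sign is $(-1)^{k(k-1)/2}i^{0}$ with the appropriate convention; I will fix the convention by checking that $B_L(L^2,L^2)=\int_Y L^{11}>0$.

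The key steps, in order, are as follows.

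\begin{enumerate}
  \item Observe that $W$ contains $L^2 = \zeta^2 + 2\zeta h + h^2$, which spans $L^2H^{0,0}$. The form is positive there, since $\int_Y L^{11}>0$.
  \item Decompose $W$ along the Lefschetz decomposition. The $LP^{1,1}$-component of an element of $W$ has the form $L\gamma$ with $\gamma$ a primitive real $(1,1)$-class. Because $h^{1,0}(Y)=h^{1,0}(X)=0$, we have
  \[
    H^{1,1}(Y,\R) = \R\zeta \oplus \pi^*H^{1,1}(X,\R).
  \]
  \item Consider the three-dimensional space $W' = W \cap (L^2)^{\perp_{B_L}}$. I will show that $B_L$ restricted to $W'$ has exactly one negative direction. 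The negative part on the whole of $H^{2,2}$ is $L\,P^{1,1}$, so the number of negative directions in $W'$ is at most the rank of the projection of $W'$ to $LP^{1,1}$.
  \item Bound that rank by one. I would argue that the projection of $W$ to $L\,H^{1,1}(Y)$ lies in $L\cdot\mathrm{span}\{\zeta,h\}$. After removing the $L^2$ direction, this leaves the single primitive combination $\gamma_0 = \zeta + c\,h$ orthogonal to $L$.
  \item Obtain the projection in the previous step by writing $\zeta^2$, $\zeta h$, $h^2$ and $Q$ through the Lefschetz $\mathfrak{sl}_2$ decomposition. The components of these classes in $L\,H^{1,1}$ are computed by contracting with the dual Lefschetz operator $\Lambda_L$. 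The classes $h^2$ and $Q$ are pulled back from $X$, and $\zeta$ appears polynomially. Everything is $\mathrm{SO}(H^2)$-equivariant in the $h$-orthogonal directions, so no component along the $h^\perp$ part of $H^{1,1}(X)$ can arise.
\end{enumerate}

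I expect this projection step to be the main obstacle. To handle it, I would use the following invariance argument. The stabilizer of $h$ in $\mathrm{SO}(H^2(X,\R),q_X)$ acts on $H^*(X,\R)$ via the LLV action, and this action extends to the characteristic classes on $Y$ by deformation invariance. The stabilizer fixes $\zeta$, $h$, $Q$ and $L$, so it preserves the Lefschetz decomposition. The invariants in $H^{1,1}$ are $\mathrm{span}\{\zeta,h\}$, because $h^\perp$ is a nontrivial irreducible representation. Hence the $LP^{1,1}$-components of $W$ lie in $L\gamma_0\R$, and the negative index is at most one.

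For the lower bound on the negative index, I would exhibit the negative direction directly. Namely, $L\gamma_0$ lies in $W$, since $L\zeta$ and $Lh$ are both combinations of $\zeta^2$, $\zeta h$ and $h^2$. By Hodge--Riemann, $B_L(L\gamma_0,L\gamma_0)<0$ because $\gamma_0\neq 0$ is primitive. Together with the upper bound from the invariance argument, this shows that $B_L$ has negative index exactly one on $W$.
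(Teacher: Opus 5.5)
Your proof is correct, but you reach the key step by a different route from the paper's. The paper splits $W = LU \perp_{B_L} P_W$ with $U = \operatorname{span}\{\zeta,h\}$ and gets signature $(1,1)$ on $LU$ from the Hodge index theorem. It then shows that $P_W$ is $L$-primitive by checking that $L^8p$ pairs to zero with all of $H^2(Y)$:
\begin{itemize}
\item with $U$ by construction;
\item with $\pi^*(h^{\perp}\cap H^{1,1}(X))$ by differentiating the contraction formula~\eqref{eq:contraction}, i.e.\ the generalized Fujiki relations for $aQ^j$ with $a = 1, c_2, s_4$;
\item with $H^{2,0}\oplus H^{0,2}$ by type.
\end{itemize}
You instead bound the negative index by the rank of the projection of $W$ onto $LP^{1,1}$ in the full Lefschetz decomposition of $H^{2,2}(Y)$. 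You then kill the $h^\perp$-part of that projection by invariance under the stabilizer of $h$ in the reduced LLV algebra.

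The two arguments rest on the same phenomenon: the Fujiki relations the paper differentiates are the numerical shadow of exactly that invariance. Your route is more conceptual and avoids expanding $L^8$ and the Hodge-type bookkeeping. It also gives nondegeneracy for free: the kernel of $W \to \R L\gamma_0$ is positive definite and $B_L$-orthogonal to $L\gamma_0 \in W$, so $B_L|_W$ has signature $(\dim W - 1, 1)$. The paper's route stays with identities on $X$ that it needs anyway for the Gram matrix computation, and it never has to construct an LLV-type action on $Y$.

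Three points need tightening.

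\begin{enumerate}
\item \emph{Where the action on $Y$ comes from.} It is not ``deformation invariance'' as such. The algebra $\so(H^2(X,\R),q_X)$ acts on $H^*(X,\R)$ by degree-zero derivations annihilating $c_2,c_4,c_6$ (Looijenga--Lunts, as used in the proof of Theorem~\ref{thm:pure}). Letting it also annihilate $\zeta$ gives a well-defined derivation action on $H^*(Y,\R) = H^*(X,\R)[\zeta]/(\zeta^6 + c_2\zeta^4 + c_4\zeta^2 + c_6)$. Work with the stabilizer subalgebra $\so(h^\perp)$ rather than a group. Derivations killing $L$ commute with $L\cup(\cdot)$ and therefore preserve the Lefschetz decomposition.

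\item \emph{Run the argument in $H^4(Y,\R)$, not $H^{1,1}$.} This action does not preserve $H^{1,1}(Y)$, so you cannot speak of invariants in $H^{1,1}$. Use $H^4(Y,\R) = L^2H^0 \oplus LP^2 \oplus P^4$ instead. The $LP^2$-component of any $w \in W$ is annihilated. Since the invariants of $H^2(Y,\R) = \R\zeta\oplus\R h\oplus h^\perp$ are exactly $U$, that component lies in $L(U\cap P^2)$. Compatibility of the Lefschetz and Hodge decompositions then places it in $LP^{1,1}$.

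\item \emph{$\dim W = 4$.} You never verify that $W$ is four-dimensional, although the statement asserts it. By the projective bundle formula this reduces to the independence of $h^2$ and $Q$ in $H^4(X,\R)$. That follows from Verbitsky's injectivity of $\Sym^2 H^2(X) \to H^4(X)$, since $h^2$ has rank one and $Q$ is nondegenerate.
\end{enumerate}
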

\begin{proof}
By Verbitsky~\cite[Theorem~1.5]{Verbitsky}, $\Sym^2 H^2(X,\R)$ injects into $H^4(X,\R)$. The classes $h^2$ and $Q$ are linearly independent because $h^2$ has rank one while $Q$ is non-degenerate; thus $\dim_\R W = 4$.

Let $U = \operatorname{span}_\R\{\zeta, h\} \subset H^{1,1}(Y,\R)$. Hard Lefschetz identifies $LU \subset W$ with $U$. On $LU$, the form $B_L$ coincides with the degree-two intersection form $\int_Y u v L^9$, which has signature $(1,1)$ because $L \in U$. Consequently, we have the orthogonal decomposition
\begin{equation}
  W = LU \mathbin{\perp_{B_L}} P_W, \qquad \text{where } P_W = W \cap (LU)^{\perp_{B_L}}.
\end{equation}
For any $x \in H^{1,1}(X,\R)$ with $q_X(h,x) = 0$, differentiating the contraction formula~\eqref{eq:contraction} in the direction $x$ gives, for $0 \le j < N$,
\[
  \int_X a Q^j x h^{2N-2j-1} = C_X(a) \prod_{\ell=0}^{j-1} \frac{b_2+2N-2-2\ell}{2N-1-2\ell} \, q_X(h)^{N-j-1}q_X(h,x).
\]
After expanding $L^8$ and applying the projective bundle formula, every nonzero term in $\int_Y \alpha x L^8$, for $\alpha \in W$, has this form with $a = 1,c_2$, or $s_4$. Hence $\int_Y \alpha x L^8 = 0$. Since $q_X(h) > 0$, the projective bundle formula gives
\begin{equation}
  H^{1,1}(Y,\R) = U \oplus \pi^*\bigl(h^{\perp_{q_X}} \cap H^{1,1}(X,\R)\bigr).
\end{equation}
Thus, for every $p \in P_W$, orthogonality to $LU$ and the preceding vanishing imply $\int_Y p u L^8 = 0$ for all $u \in H^{1,1}(Y,\R)$. Since $L^8p$ has type $(10,10)$, its pairings with $H^{2,0}(Y)$ and $H^{0,2}(Y)$ also vanish. Poincar\'e duality then gives $L^8p = 0$. Therefore, every class in $P_W$ is primitive with respect to the Lefschetz operator $L$. By the classical Hodge--Riemann bilinear relations, $B_L$ is positive definite on primitive real $(2,2)$-classes. Hence, $P_W$ is positive definite, and $B_L|_W$ has signature $(3,1)$, which is of negative index one.
\end{proof}

We now apply these results to exclude $b_2(X) = 24$. At this value, the contraction identity~\eqref{eq:contraction} evaluates to:
\begin{equation}
  \begin{gathered}
    \int_X Q h^4 = \frac{28R}{5} t^2, \qquad \int_X Q^2 h^2 = \frac{728R}{15} t, \\
    \int_X c_2 Q h^2 = \frac{52R}{5} t, \qquad \int_X c_2 Q^2 = \frac{1248R}{5}, \qquad \int_X s_4 Q = 24\left(\frac{12R}{5} - 3456\right).
  \end{gathered}
\end{equation}

\begin{proof}[Proof of Theorem~\ref{thm:main}]
By Proposition~\ref{prop:26} and Lemma~\ref{prop:24segre}, we have $b_2(X) \le 24$. To rule out $b_2(X) = 24$, pass to the very general deformation chosen above and normalize the K\"ahler class $h$ such that $t = q_X(h) = 3/4$. By Proposition~\ref{prop:strictrootkahler}, $L = \zeta + h$ is K\"ahler, and by Proposition~\ref{prop:projectiveHRsignature}, $B_L$ has negative index one on $W$.

Consider the explicit two-dimensional subspace of $W$ spanned by
\begin{equation}
  \alpha = \zeta^2 + \zeta h, \qquad \beta = -4\zeta^2 + 6\zeta h - Q.
\end{equation}
Expanding the intersections $\int_Y \alpha^2 L^7$, $\int_Y \alpha \beta L^7$, and $\int_Y \beta^2 L^7$ using Lemma~\ref{lem:projectivebundle} and the contraction formulas at $t = 3/4$, the Gram matrix of $B_L$ on $\operatorname{span}_\R\{\alpha, \beta\}$ is
\begin{equation}\label{eq:sixfoldFixedNegativePlane}
  M = M_0 + K \begin{pmatrix} 1 & -4 \\ -4 & 16 \end{pmatrix},
\end{equation}
where
\begin{equation}
  M_0 = \begin{pmatrix}
    \frac{243R}{16} - 93312 & -\frac{591R}{5} + 248832 \\
    -\frac{591R}{5} + 248832 & \frac{27477R}{80} - 756864
  \end{pmatrix}.
\end{equation}
On the range $2077 \le R \le 2170$ established in~\eqref{eq:projectiveHRrectangle}, we compute:
\begin{equation}
  (M_0)_{11} \le -\frac{482841}{8}, \qquad (M_0)_{22} \le -\frac{92403}{8}, \qquad |(M_0)_{12}| \le 7662.
\end{equation}
Because both diagonal entries are strictly negative and satisfy $(M_0)_{ii} < -|(M_0)_{12}|$, the matrix $M_0$ is strictly negative definite. Furthermore, the second matrix in~\eqref{eq:sixfoldFixedNegativePlane} is $K v v^t$ for $v = (1, -4)^t$. Since $K \le 0$, this matrix is negative semidefinite.

Consequently, the Gram matrix $M$ is strictly negative definite, meaning that $B_L$ is negative definite on the two-plane $\operatorname{span}_\R\{\alpha, \beta\} \subset W$. This forces $B_L|_W$ to have negative index at least two, contradicting Proposition~\ref{prop:projectiveHRsignature}. Thus, $b_2(X) = 24$ is impossible, completing the proof of Theorem~\ref{thm:main}.
\end{proof}

\section{Odd Betti Numbers and Second Chern Class Purity}\label{sec:endpoint}

We bound $b_5(X)$ for $19 \le b_2(X) \le 23$ and deduce the vanishing of all odd Betti numbers when $b_2(X) = 23$. Under the purity hypothesis $c_2(X) \in \SH^4(X,\Q)$, we then determine the possible values of $b_2(X)$ and the full Betti vector at $b_2(X) = 23$.

\begin{corollary}\label{cor:fifthbetti}
Let $X$ be an IHS sixfold with $19 \le b_2(X) \le 23$. Then $b_5(X)$ belongs to the following sets:
\begin{equation}
  b_5(X) \in
  \begin{cases}
    \{0, 1024, 2048\}, & \text{if } b_2(X) = 19, \\
    \{0, 1024\},       & \text{if } 20 \le b_2(X) \le 22, \\
    \{0\},             & \text{if } b_2(X) = 23.
  \end{cases}
\end{equation}
\end{corollary}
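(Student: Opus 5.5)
By Lemma~\ref{prop:highbodd}, $b_3 = 0$ throughout the range $19 \le b_2 \le 23$. In Lemma~\ref{prop:oddgrowth} this forces $u = 0$, so
\[
b_5 = d_{b_2}\, v, \qquad w = b_5 .
\]
The proof therefore reduces to bounding the spin multiplicity $v$. The plan is to insert $b_3 = 0$ into the odd Segre budget~\eqref{eq:oddSegrebudget} and eliminate the exterior multiplicities $m_i$ using Salamon's relation~\eqref{eq:Salamonmodules}. This gives an inequality of the form
\[
\frac{d_{b_2}\, v}{2} \le \text{const}(b_2) - c\,\tau ,
\]
with $c \ge 0$. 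From it we read off $v \le v_{\max}(b_2)$.

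For each $b_2$ I would choose $\gamma > 0$ so that
\[
2b_4(E_k) - 22h^{3,1}(E_k) \le \gamma \operatorname{Sal}(E_k)
\]
holds for every exterior summand $E_k$, including the half middle summands in type $D_r$. The verification is the AM--GM estimate~\eqref{eq:AMGMcomparison}, adapted with $22$ in place of $23$, or an exact factorisation in $x = k(b_2+2-k)$ in the manner of the $b_2 = 19$ case of Lemma~\ref{prop:highbodd}. Multiplying~\eqref{eq:Salamonmodules} by $\gamma$ and subtracting then yields
\[
\frac{w}{2} \le b_2^2 - 57b_2 + 1638 + \gamma f(b_2) - \gamma\tau .
\]
I would also run the $R$ version in parallel: combine~\eqref{eq:Smodules} with $R \ge 1440$ to get the analogous bound on $w/2$, using the $\gamma$ from Lemma~\ref{prop:highbodd}. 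Either inequality could be the sharper one at a given $b_2$, so both should be tried. Plugging in $d_{19} = d_{20} = 512$, $d_{21} = d_{22} = 1024$ and $d_{23} = 2048$, the targets are:
\begin{itemize}
\item $v \le 4$ at $b_2 = 19$, which together with the evenness of $v$ gives $v \in \{0, 2, 4\}$;
\item $v \le 3$ at $b_2 = 20$, which together with evenness gives $v \in \{0, 2\}$;
\item $v \le 1$ at $b_2 = 21, 22$;
\item $v = 0$ at $b_2 = 23$.
\end{itemize}

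As a sanity check, at $b_2 = 19$ the inequality~\eqref{eq:odd19budget} with $u = 0$ already gives $7296\, v \le 33756$, so $v \le 4$. Hence $b_5 = 512\, v \in \{0, 1024, 2048\}$ comes for free. At $b_2 = 20$ the evenness from Lemma~\ref{prop:oddgrowth} again does the rounding.

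The main obstacle is the $b_2 = 23$ case. There a single spin summand contributes $2048$, and the generic $\gamma$-bound may not be sharp enough to exclude $v = 1$. If it fails, I would mimic Lemma~\ref{prop:endpointfamilies}. First, use the $R$ identity with $\lambda = \mathcal{R}(E_2)/\operatorname{Sal}(E_2)$ to rule out $E_k$ for $k \ge 5$ and to pin down $m_4$; since $f(23) < 0$, some negative-Salamon summand must occur. Second, enumerate the finitely many remaining $(m_1, m_2, m_3, m_4, \tau)$ solving~\eqref{eq:Salamonmodules}. For each, check that $R \ge 1440$, $K \le 0$ (in the form~\eqref{eq:oddSegrebudget}) and $b_5 = 2048$ cannot hold simultaneously. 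A similar finite check would be the fallback at $b_2 = 21, 22$ if the direct bound only yields $v \le 2$.
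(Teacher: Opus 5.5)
Your proposal is correct and is essentially the paper's proof: the paper takes $\gamma$ to be the $E_2$-ratio $\eta_{b_2} = \bigl(2b_4(E_2)-22h^{3,1}(E_2)\bigr)/\operatorname{Sal}(E_2) = 4(b_2-11)/(b_2^2-13b_2+2)$, checks it against every $E_k$ via the factorisation $22(x-2b_2)(x-\rho_{b_2})/(b_2(b_2-1)x) \ge 0$, and gets $b_5 < 2048$ for every $20 \le b_2 \le 23$ (for example $b_5 \le 1760$ at $b_2 = 23$), so the direct bound suffices and no fallback is needed. This matters because your fallback at $b_2 = 23$ relies on $f(23) < 0$, which is false ($f(23) = 116$), so nothing there would force a negative-Salamon summand or pin down $m_4$.
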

\begin{proof}
By Lemma~\ref{prop:highbodd}, $b_3(X) = 0$, which implies $b_5 = d_{b_2} v$. For $b_2 = 19$, inequality~\eqref{eq:odd19budget} forces $v \le 4$. Because $v$ must be even by Lemma~\ref{prop:oddgrowth}, we have $v \in \{0, 2, 4\}$, which proves the first case.

For $20 \le b_2 \le 23$, define the constants
\begin{equation}
  \eta_{b_2} = \frac{4(b_2-11)}{b_2^2-13b_2+2}, \qquad \rho_{b_2} = \frac{(b_2-11)(b_2+1)(b_2+2)(b_2-1)}{11(b_2^2-13b_2+2)}.
\end{equation}
Setting $x = k(b_2+2-k)$, the binomial coefficients satisfy
\begin{equation}
  \frac{\eta_{b_2}\operatorname{Sal}(E_k) - \mathcal{R}(E_k) - h^{3,1}(E_k)}{b_4(E_k)} = \frac{22(x-2b_2)(x-\rho_{b_2})}{b_2(b_2-1)x} \ge 0,
\end{equation}
for all $1 \le k \le \left\lfloor \frac{(b_2+2)}{2} \right\rfloor$. Combining~\eqref{eq:Salamonmodules} and~\eqref{eq:oddSegrebudget} yields
\begin{equation}
  \frac{b_5}{2} \le b_2^2 - 57b_2 + 1638 + \eta_{b_2} f(b_2).
\end{equation}
Multiplying this bound by two and evaluating it for $b_2 \in \{20, 21, 22, 23\}$ yields upper bounds on $b_5$ of $\frac{141556}{71}, \frac{32508}{17}, \frac{45864}{25}$, and $1760$, all strictly smaller than $2048$. Divisibility by $d_{b_2}$ together with the parity of $v$ establishes the claim.
\end{proof}

\begin{proof}[Proof of Theorem~\ref{thm:oddendpoint}]
Part~(i) is established by Lemma~\ref{prop:highbodd}. For part~(ii), Corollary~\ref{cor:fifthbetti} shows that $b_5(X) = 0$ when $b_2(X) = 23$. By simple connectedness and Poincar\'e duality, all odd Betti numbers must vanish.
\end{proof}

\begin{proof}[Proof of Theorem~\ref{thm:pure}]
Assume $c_2(X) \in \SH^4(X,\Q)$, and set $(a,d,e) = \int_X (c_2^3, c_2 c_4, c_6)$. By Beckmann--Song~\cite[Proposition~2.10]{BS}, $[\sqrt{\td_X}]_8 \in \SH^8(X,\Q)$, which forces $c_4 \in \SH^8(X,\Q)$. By Looijenga--Lunts~\cite[Propositions~4.5(ii) and~4.9]{LL}, the reduced LLV algebra $\so(H^2, q_X)$ acts by derivations and fixes the Pontryagin classes. Since $p_1 = -2c_2$ and $p_2 = c_2^2 + 2c_4$, it also fixes $c_2$ and $c_4$.

The orthogonal and special orthogonal groups have the same polynomial invariants on one vector. By the first fundamental theorem~\cite[Theorem~5.2.2(1)]{GoodmanWallach}, the invariant lines in $\Sym^2 H^2(X,\C)$ and $\Sym^4 H^2(X,\C)$ are generated $Q$ and $Q^2$, where $Q$ is the inverse of quadratic tensor $q_X$. Since the reduced LLV algebra is semisimple, complete reducibility~\cite[Theorem~3.3.12]{GoodmanWallach} allows invariant classes in the equivariant cup-product quotients $\SH^4$ and $\SH^8$ to lift to invariant tensors. Hence, $c_2$ is a scalar multiple of $Q$ and $c_4$ is a scalar multiple of $Q^2$.

Equality in Beckmann--Song's inequality gives $R = \frac{2880(b_2+4)}{b_2+1}$. Comparing the contraction formula~\eqref{eq:contraction} with the generalized Fujiki constants from Lemma~\ref{lem:fujiki}, we obtain
\[
  c_2 = \frac{6}{b_2+4}Q, \qquad c_4 = \frac{18(b_2-3)}{(b_2+4)^2(b_2+2)}Q^2,
\]
and
\[
  \int_X Q^3 = \frac{R\,b_2(b_2+2)(b_2+4)}{15}.
\]
Substituting the value of $R$ gives
\begin{equation}\label{eq:pureChern}
  a = \frac{41472b_2(b_2+2)}{(b_2+1)(b_2+4)}, \qquad d = \frac{20736b_2(b_2-3)}{(b_2+1)(b_2+4)}.
\end{equation}
Using the relation $a - d = 23616 + 24\chi^1$ from Lemma~\ref{lem:chernchi}, we deduce
\begin{equation}
  \chi^1 = -120 - \frac{1728}{b_2+1} + \frac{3456}{b_2+4}.
\end{equation}
Multiplying by $(b_2+1)(b_2+4)$ and reducing modulo $(b_2+1)$ forces $(b_2+1) \mid 5184$. Similarly, the integrality of $a$ and $d$ modulo $(b_2+4)$ implies $(b_2+4) \mid \gcd(331776, 580608) = 82944$. By Theorem~\ref{thm:main}, $4 \le b_2+1 \le 24$. The only integers in this range satisfying both divisibility conditions are $b_2+1 \in \{6, 9, 24\}$, which proves that $b_2(X) \in \{5, 8, 23\}$.

Finally, assume $b_2(X) = 23$. Theorem~\ref{thm:oddendpoint} implies that all odd Betti numbers vanish, and equality in Beckmann--Song's inequality gives $R = 3240$. The representation-theoretic identities~\eqref{eq:Smodules} and~\eqref{eq:Salamonmodules} become
\begin{equation}
  \sum_{i \in \mathcal{I}} m_i\operatorname{Sal}(E_i) + \tau = 116, \qquad \sum_{i \in \mathcal{I}} m_i\mathcal{R}(E_i) = 23.
\end{equation}
For $x = k(25-k)$, the binomial ratios~\eqref{eq:exteriorratios} give
\[
  \frac{(23/116)\operatorname{Sal}(E_k)-\mathcal{R}(E_k)}{b_4(E_k)} = \frac{23(x-46)(29x-1650)}{14674x}.
\]
The allowed values of $x$ begin with $24,46,66$ and then increase. Thus $\mathcal{R}(E_k) \le \frac{23}{116}\operatorname{Sal}(E_k)$ for all exterior summands, with equality if and only if $k = 2$. Consequently,
\begin{equation}
  0 = \frac{23}{116} \tau + \sum_{i \in \mathcal{I}} m_i \left(\frac{23}{116}\operatorname{Sal}(E_i) - \mathcal{R}(E_i)\right).
\end{equation}
Since every term on the right-hand side is non-negative, we must have $\tau = 0$ and $m_i = 0$ for all $i \ne 2$. Since $\operatorname{Sal}(E_2) = 116$, we have $m_2 = 1$. Thus, as an LLV module,
\begin{equation}
  H^{\mathrm{even}}(X,\C) \simeq \SH \oplus E_2\ , \ \ E_2= \Lambda^2 \C^{25}.
\end{equation}
In particular, $b_4 = \binom{24}{2} + 23 = 299$. Salamon's relation~\eqref{eq:intro-salamon} then computes $b_6 = 6(299) + 30(23) + 70 = 2554$. Poincar\'e duality determines the remainder of the Betti vector.
\end{proof}

\begin{remark}
At $b_2(X) = 23$, the purity hypothesis in Theorem~\ref{thm:pure} determines the Betti vector and LLV decomposition; they coincide with those of $\mathrm{K3}^{[3]}$.
\end{remark}


\renewcommand{\bibliofont}{\small}

\end{document}